\documentclass[11pt]{article}
\usepackage{graphicx}
\usepackage{amssymb}
\usepackage{epstopdf}
\DeclareGraphicsRule{.tif}{png}{.png}{`convert #1 `dirname #1`/`basename #1 .tif`.png}
\usepackage{amsmath,amsthm,amscd,amssymb}
\usepackage{latexsym}
\usepackage[colorlinks,citecolor=red,pagebackref,hypertexnames=false]{hyperref}
\usepackage{geometry}                
\geometry{letterpaper}

\numberwithin{equation}{section}

\theoremstyle{plain}
\newtheorem{theorem}{Theorem}[section]
\newtheorem{lemma}[theorem]{Lemma}

\theoremstyle{definition}

\newtheorem{case[theorem]}{Case}

\theoremstyle{remark}
\newtheorem{remark}[theorem]{Remark}

\numberwithin{equation}{section}

\begin{document}

\title{Erd\H{o}s Type Problems in Modules over Cyclic Rings}

\author{Esen Aksoy Yazici}

\maketitle

\begin{abstract} In the present paper, we study various Erd\H{o}s type geometric problems in the setting of the integers modulo $q$, where $q=p^l$ is an odd prime power. More precisely, we prove certain results about the distribution of triangles and triangle areas among the points of $E\subset \mathbb{Z}_q^2$. We also prove a dot product result for $d$-fold product subsets $E=A\times \ldots \times A$ of $\mathbb{Z}_q^d$, where $A\subset \mathbb{Z}_q$. \end{abstract}

\section{Introduction}

\vskip.125in

 The classical Erd\H{o}s distance problem asks for the number of distinct distances determined by $n$ points in $\mathbb{R}^d$. In \cite{E46},  Erd\H{o}s conjectured that the minimum number of distinct distances determined by $n$ points  in the Euclidean plane is $C\frac{n}{\sqrt{ log n}}$. Several results have been given in this direction and recently L. Guth and G. Katz \cite{GK11} settled the conjecture, up to a square root $log$ factor showing that $n$ points determine at least $C\frac{n}{log n}$ distances. We shall note here that in higher dimensions the distance problem is still open with the best known results are due to Solymosi and Vu, \cite{SV04}. 

A natural generalization of the distance problem is the distribution of $k$-simplices. In this setting, for $k=2$, two triangles are said to be congruent if their corresponding side lengths are equal and we will denote by $T_2(E)$ the set of congruence classes of triangles. In \cite{GILP}, for subsets $E$ of $\mathbb{R}^2$, Greenleaf, Iosevich, Liu and Palsson proved that if $dim_H(E)>\frac{8}{5}$, then $\mathcal{L}^3(T_2(E))>0.$

A variant of the distance problem is the dot product-volume set problem.  Given $n$ points in the plane one can easily see that the number of distinct triangle areas determined by the points can be as large as $n \choose 3$ if the points are in general position. In 2008, \cite{P08}, Pinchasi settled a long standing conjecture of Erd\H{o}s, Purdy and Straus proving that the number of distinct areas of triangles determined by a non-collinear point set of size $n$ is at least $\lfloor{\frac{n-1}{2}}\rfloor$.

One can ask similar discrete questions in the context of vector spaces over finite fields $\mathbb{F}_q^d$, or modules over finite cyclic rings $\mathbb{Z}_q^d$. Several problems have been studied by various authors in the context of finite fields, see for example \cite{BIP}, \cite{BHIPR13}, \cite{CEHIK12}, \cite{CHISU},\cite{HI}, \cite{HIKR11}, \cite{IRZ12} and the references therein. Indeed, the techniques and results for the problems in the context of finite fields are an analogous version of those in Euclidean space. 

For $G=\mathbb{F}_q$ or $\mathbb{Z}_{q}$, and $x,y \in G^d$, we can consider the following distance map

$$ \lambda: (x,y)\longmapsto\|x-y\|=(x_1-y_1)^2+\ldots+(x_d-y_d)^2.$$

This map does not induce a metric on $G^d$. However, we will rely on the fact that it is invariant under orthogonal transformations in the subsequent sections. The Erd\H{o}s-Falconer distance problem in $G^d$ asks for a threshold on the size  $E\subset G^d$ so that the distance set of $E$,
$$\Delta(E):=\{\|x-y\|: x,y\in E\},$$
is about the size $q$. 

In \cite{IR07},  Iosevich and Rudnev prove that for $E\subset \mathbb{F}_q^d$ if $|E|>Cq^{\frac{d+1}{2}}$ for a sufficiently large constant $C$, then $\Delta(E)=\mathbb{F}_q$. Note that this result is in parallel to the Falconer result for the subsets of $\mathbb{R}^d$. 

Given a subset $E$ of $G^d$, the dot product is defined as $$\prod(E)=\{x.y:\; x,y\in E \},$$ where dot product of two vectors is defined by the usual formula  $$x.y=x_1y_1+\ldots+ x_dy_d,$$ for $x=(x_1,\ldots, x_d)$ and $y=(y_1,\ldots, y_d)$.

We also define the $d$- dimensional non-zero volumes of  $(d+1)$-simplices whose vertices are in $E$ by   
$$V_d(E)=\{det(x^{1}-x^{d+1},\ldots, x^{d}-x^{d+1}): \;x^{j}\in E\}\setminus\{0\}.$$ The distribution of triangle areas among the points of a subset $E$ of $\mathbb{F}_q^2$ is studied by Iosevich, Rudnev, and Zhai in \cite{IRZ12}. The method uses a point-line incidence theory and the result is the following. If $E\subset \mathbb{F}_q^2$ with $|E|>q,$ then $|V_2(E)|\geq \frac{q-1}{2}$, and the triangles giving at least $\frac{q-1}{2}$ distinct areas can be chosen such that they share the same base.

In this paper, we turn our attention to the Erd\H{o}s-Falconer type problems in modules $\mathbb{Z}_q^d$ over the cyclic rings $\mathbb{Z}_q$, where $q=p^l$, $p$ is an odd prime, and prove the following results. Compared to configurations in vector spaces over finite fields, to overcome the difficulties arising  from the zero divisors in these cyclic rings, an extra arithmetical machinery is developed.


 
\vskip.125in 

\noindent \textbf{Acknowledgments.} The author is grateful to Alex Iosevich and Jonathan Pakianathan for their guidance and support.

\vskip.125in 
\subsection{Statement of main results}
\vskip.125in 
Let $$SO_2(\mathbb{Z}_q)=\{A\in M_2(\mathbb{Z}_q): AA^T=I,\;det(A)=1\}.$$ In this setting, two triangles 
$(x^1,x^2,x^3)$, $(y^1,y^2,y^3)$ in $\mathbb{Z}_q^2$ are said to be congruent
if $\exists$$\theta\in SO_2(\mathbb{Z}_q)$ such that
$$x^i-x^j=\theta(y^i-y^j) \; \text{for all}\; i, j.$$ 

Let $T_2(E)$ denote the set of congruence classes of triangles determined by the points of $E\subset\mathbb{Z}_q^2.$ We then prove the following.
\vskip.125in

\begin{theorem}\label{triconfig}
Suppose $E\subset \mathbb{Z}_q^2$ with $q=p^l$ and $p \equiv 3\; \text{mod}\; 4$. If $|E|\geq\sqrt[3]{3}p^{2l-\frac{1}{3}}$, then $|T_2(E)|\gtrsim q^3$.
\end{theorem}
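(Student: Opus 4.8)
The plan is to reduce the theorem to an energy ($L^2$) estimate and then evaluate that energy by Fourier analysis on $\mathbb{Z}_q^2$. The starting point is that the three squared side lengths form the $SO_2(\mathbb{Z}_q)$-invariant data we need: since $\|\theta v\|=\|v\|$ for $\theta\in SO_2(\mathbb{Z}_q)$, congruent triangles determine the same triple $(\|x^1-x^2\|,\|x^1-x^3\|,\|x^2-x^3\|)\in\mathbb{Z}_q^3$ (and, as $2$ is invertible, this triple also fixes all edge dot products by polarization). Hence the map sending a triangle to its triple of squared side lengths is constant on congruence classes, so $|T_2(E)|$ is at least the number of realized triples; since these triples range over $\mathbb{Z}_q^3$, the value $q^3$ is the natural ceiling and the content of the theorem is the matching lower bound. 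Writing $\nu(a,b,c)$ for the number of ordered triples in $E$ with a given squared-side-length data $(a,b,c)$, and noting $\sum_{a,b,c}\nu(a,b,c)=|E|^3$, Cauchy--Schwarz gives
$$|E|^6=\Big(\sum_{a,b,c}\nu(a,b,c)\Big)^2\le \big|\{(a,b,c):\nu>0\}\big|\cdot \sum_{a,b,c}\nu(a,b,c)^2,$$
so the number of realized triples is at least $|E|^6/\Lambda$ with $\Lambda:=\sum_{a,b,c}\nu(a,b,c)^2$, and $|T_2(E)|\ge |E|^6/\Lambda$. Thus it suffices to prove the energy bound $\Lambda\lesssim |E|^6/q^3$.

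Next I would expand $\Lambda$, which counts pairs of triangles with identical squared side lengths, over the additive characters $\chi$ of $\mathbb{Z}_q$. Writing each constraint as $\mathbf{1}[\|x^i-x^j\|=\|y^i-y^j\|]=\tfrac1q\sum_{s_{ij}}\chi(s_{ij}(\|x^i-x^j\|-\|y^i-y^j\|))$ and separating the $x$- and $y$-variables yields
$$\Lambda=\frac1{q^3}\sum_{s_{12},s_{13},s_{23}}\big|S(s_{12},s_{13},s_{23})\big|^2,\qquad S(\vec s):=\sum_{x^1,x^2,x^3\in E}\chi\Big(\textstyle\sum_{i<j}s_{ij}\|x^i-x^j\|\Big).$$
The term $\vec s=0$ gives $S(0)=|E|^3$ and contributes exactly the desired main term $|E|^6/q^3$, so everything reduces to showing that $\frac1{q^3}\sum_{\vec s\ne 0}|S(\vec s)|^2$ is no larger. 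The phase $\sum_{i<j}s_{ij}\|x^i-x^j\|$ is a quadratic form on $(\mathbb{Z}_q^2)^3$ that splits as $Q_1\oplus Q_1$ across the two coordinate axes, where $Q_1(a,b,c)=s_{12}(a-b)^2+s_{13}(a-c)^2+s_{23}(b-c)^2$ is a ternary form; hence $S(\vec s)$ is the restriction to $E^3$ of a Gauss sum for this form.

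The heart of the argument is estimating $S(\vec s)$ for $\vec s\neq 0$. For $E=\mathbb{Z}_q^2$ this is a genuine Gauss sum and, whenever $Q_1$ is nondegenerate modulo $p$, its standard evaluation makes $|S(\vec s)|$ small; for general $E\subset\mathbb{Z}_q^2$ I would complete the square, pass to the Fourier side, and bound $|S(\vec s)|$ in terms of $|E|$ and the Gauss-sum normalizing factor of $Q_1$ via Plancherel and Cauchy--Schwarz in the $E$-variables. Here the hypothesis $p\equiv 3 \bmod 4$ is essential: it makes $-1$ a nonresidue, so $x^2+y^2$ is anisotropic modulo $p$, its only isotropic vector mod $p$ being the origin. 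This both pins down $|SO_2(\mathbb{Z}_q)|=(p+1)p^{l-1}\approx q$ and guarantees clean magnitudes for the quadratic Gauss sums attached to $Q_1$.

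The main obstacle is precisely the zero divisors of $\mathbb{Z}_{p^l}$: the form $Q_1$ degenerates modulo $p$ on a positive-codimension set of frequencies (those where the discriminant of $Q_1$ acquires positive $p$-adic valuation), and on these strata the Gauss sums are inflated by powers of $p$ while the associated difference vectors $x^i-x^j$ may themselves be zero divisors. I would handle this by stratifying the $\vec s$-sum according to the $p$-adic valuations of $s_{12},s_{13},s_{23}$ and of the discriminant of $Q_1$, estimating $|S(\vec s)|$ separately on each stratum and summing the contributions; this is the ``extra arithmetical machinery'' referred to in the introduction, and I expect it to be the most delicate part. Balancing the worst such stratum against the main term $|E|^6/q^3$ is what forces a density threshold, and careful bookkeeping of these valuation strata should yield exactly the stated requirement $|E|\ge \sqrt[3]{3}\,p^{2l-1/3}$, giving $\Lambda\lesssim |E|^6/q^3$ and hence $|T_2(E)|\gtrsim q^3$.
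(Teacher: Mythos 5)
Your reduction is sound as far as it goes: since $\|\theta v\|=\|v\|$ for $\theta\in SO_2(\mathbb{Z}_q)$, the map from congruence classes to triples of squared side lengths is well defined, so $|T_2(E)|$ dominates the number of realized triples, and your orthogonality identity $\Lambda=q^{-3}\sum_{\vec s}|S(\vec s)|^2$ with main term $|E|^6/q^3$ at $\vec s=0$ is correct. But the proof stops exactly where the theorem lives. Everything from ``the heart of the argument'' onward is conditional --- ``I would complete the square,'' ``I would handle this by stratifying,'' ``careful bookkeeping \ldots should yield exactly the stated requirement'' --- and no bound on $|S(\vec s)|$ for $\vec s\neq 0$ is ever established. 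What you need is $\sum_{\vec s\neq 0}|S(\vec s)|^2\lesssim |E|^6$ once $|E|\geq \sqrt[3]{3}\,p^{2l-\frac13}$, and that is not a routine Gauss-sum computation: all three vertices are restricted to $E$, so completion is available in at most one vertex at a time and the losses accumulate in the remaining restricted sums; the ternary form $Q_1$ is \emph{always} degenerate (its radical contains the diagonal $(t,t,t)$, by translation invariance); and on the strata where the discriminant of $Q_1$ has positive $p$-adic valuation the completed sums inflate by powers of $p$. Nothing in your sketch shows these contributions sum to the main term at the claimed threshold, and in particular no mechanism in your outline produces the factor $p^{l-1}$ that actually governs the exponent $2l-\frac13$. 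Note also that your $\Lambda$ counts pairs of \emph{isometric} triangles, which strictly dominates the paper's congruence energy, so the bound you are promising is formally stronger than the one the paper proves.

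For contrast, the paper never passes to side-length triples: it keeps the group action, defines $\mu(a,b,c)$ as the number of triangles in $E^3$ rotation-congruent to $(a,b,c)$, and bounds $\sum\mu^2$ by $\sum_{\theta,t}\nu_\theta^3(t)$ where $\nu_\theta(t)=|\{(u,v)\in E^2: u-\theta v=t\}|$. The third moment is handled by Lemma \ref{taylor}, the Fourier side collapses via the identity $\widehat{\nu}_\theta(\xi)=q^2\widehat{E}(\xi)\widehat{E}(-\theta^{T}\xi)$, and the whole problem reduces to the single arithmetic input $|Stab(\xi)|\leq p^{l-1}$ for all $\xi\neq(0,0)$ (Lemmas \ref{nonzero norm} and \ref{zero norm}); the hypothesis $p\equiv 3\bmod 4$ enters only in Lemma \ref{zero norm}, where $\|\xi\|=0$ forces $\xi=p^m(u,v)$ with $m\geq l/2$ and a Hensel-lifting count gives the stabilizer bound. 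That stabilizer estimate is the ``extra arithmetical machinery'' of the introduction --- not a Gauss-sum stratification --- and it is what yields $\sum\mu^2\lesssim q^{-3}|E|^6+3q^2|E|^3p^{l-1}$ and hence the precise threshold $|E|\geq\sqrt[3]{3}\,p^{2l-\frac13}$. To rescue your route you would have to execute the stratified estimates of $S(\vec s)$ in full, which is plausibly harder than the paper's argument and comes with no guarantee of landing on the same exponent; as it stands, the central estimate of your proposal is asserted rather than proved.
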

In \cite{CIP}, Covert, Iosevich and Pakianathan prove an asymptotically sharp bound for the distance set  $\Delta(E)$ of $E\subset \mathbb{Z}_q^d$. More precisely, it is shown that if $E\subset \mathbb{Z}_q^d$, where $q=p^l$, and $|E|\gg l(l+1)q^{\frac{(2l-1)d}{2l}+\frac{1}{2l}}$, then $\Delta(E)\supset \mathbb{Z}_q^{*}$, where  $\mathbb{Z}_q^{*}$ denotes the set of unit elements of $\mathbb{Z}_q$. 
In Theorem \ref{triconfig} above, we study distribution of triangles for subsets $E$ of  $\mathbb{Z}_q^2$.

\vskip.125in

In Theorem \ref{Areainc}, we modify a method used in \cite{IRZ12} over finite fields, to prove a sufficient condition on the size of $E\subset \mathbb{Z}_q^2$ so that the number of distinct triangle areas determined by $E$ is about the size $q.$ 

For $E\subset \mathbb{Z}_q^2,$ let
$$V_2(E)=\{det(x^{1}-x^{3},x^{2}-x^{3}): \;x^{j}\in E\}\setminus\{0\}.$$

\begin{theorem}\label{Areainc}
Let $E\subset \mathbb{Z}_q^2$ where $q=p^l$. Suppose that $|E|>p^{2l-\frac{1}{2}}$. Then $|V_2(E)|\geq \frac{q}{4}\frac{1+p}{p}-1$. 
\end{theorem}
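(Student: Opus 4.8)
The plan is to transplant the fixed-base energy argument of \cite{IRZ12} to the ring $\mathbb{Z}_q$, the one genuinely new ingredient being an exponential-sum estimate over $\mathbb{Z}_{p^l}$ that records $p$-adic valuations. For an ordered pair $(x^1,x^2)\in E^2$ with $x^1\neq x^2$ (a \emph{base}) and $t\in\mathbb{Z}_q$ put
$$ n_t(x^1,x^2)=\big|\{x^3\in E:\ \det(x^2-x^1,\,x^3-x^1)=t\}\big|. $$
Every apex $x^3\in E$ falls into exactly one level set, so $\sum_t n_t(x^1,x^2)=|E|$, and by Cauchy--Schwarz the number $D(x^1,x^2)$ of distinct areas realised on this base satisfies $D(x^1,x^2)\ge |E|^2/\sum_t n_t(x^1,x^2)^2$. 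Since $V_2(E)$ omits the zero area, $|V_2(E)|\ge D(x^1,x^2)-1$ for any base; so it is enough to exhibit one \emph{primitive} base (one for which $x^2-x^1$ has a unit coordinate, so that $D$ is not a priori capped below $q$) on which the second moment $\sum_t n_t^2$ is small.

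To locate such a base I would average the second moment over primitive bases. Expanding the square and using that the apexes $x^3,x^4$ give equal areas exactly when $\det(x^2-x^1,\,x^3-x^4)=0$,
$$ \sum_{(x^1,x^2)\ \mathrm{prim}}\ \sum_t n_t(x^1,x^2)^2 \ \le\ Q:=\big|\{(a,b,c,d)\in E^4:\ \det(b-a,\,c-d)=0\}\big|, $$
the number of quadruples whose difference vectors are parallel. Writing $P_0$ for the number of primitive bases, averaging produces a primitive base with $\sum_t n_t^2\le Q/P_0$, whence
$$ |V_2(E)|\ \ge\ \frac{|E|^2\,P_0}{Q}\ -\ 1 . $$
The lower bound on $P_0$ is where the density hypothesis first pays off: a difference vector fails to be primitive only if it lies in $(p\mathbb{Z}_q)^2$, so $P_0\ge |E|\big(|E|-q^2/p^2\big)$, and $|E|>p^{2l-\frac12}=q^2/\sqrt p$ forces $P_0\gtrsim |E|^2$. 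The whole theorem thus reduces to the single estimate $Q\lesssim |E|^4/q$, with the implied constant tracked explicitly.

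Bounding $Q$ is the crux. Detecting the determinant with additive characters $e_q$ and letting $r(w)=|\{(x,y)\in E^2:x-y=w\}|$,
$$ Q=\frac1q\sum_{s\in\mathbb{Z}_q}\ \sum_{w,w'}r(w)r(w')\,e_q\!\big(s\,(w_1w_2'-w_2w_1')\big), $$
so $s=0$ yields the main term $q^{-1}\big(\sum_w r(w)\big)^2=|E|^4/q$, while the frequencies $s\neq0$ must be shown to contribute less. This is precisely the step at which the ring $\mathbb{Z}_{p^l}$ departs from a field: the rotation $w'\mapsto s\,(w_2',-w_1')$ dualising the character is no longer invertible once $p\mid s$, so one cannot treat all nonzero $s$ on the same footing. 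I would stratify the frequencies by their valuation $v_p(s)=j$, $0\le j\le l-1$, and estimate each stratum with the Gauss-sum bounds over $\mathbb{Z}_{p^l}$ (whose size jumps with $j$) together with $\|\widehat E\|_\infty\le|E|$ and Plancherel $\sum_\xi|\widehat E(\xi)|^2=q^2|E|$; this valuation bookkeeping is the ``extra arithmetical machinery'' promised in the introduction. The main obstacle is that the bottom stratum $j=l-1$, where the character is almost trivial, produces the largest error, and controlling it is exactly what dictates the threshold: one needs $|E|$ past $p^{2l-\frac12}$ for this worst stratum to stay below a fixed multiple of $|E|^4/q$. Feeding the resulting bound on $Q$, and the bound on $P_0$, back into $|V_2(E)|\ge |E|^2P_0/Q-1$ and carrying the constants through the sum over $j$ should deliver the stated $|V_2(E)|\ge \frac q4\frac{1+p}{p}-1$, the factor $\frac14\frac{1+p}{p}$ reflecting the worst-case stratum and the density of primitive directions rather than the bare main term, which by itself only gives order $q$.
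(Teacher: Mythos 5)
Your reduction is sound as far as it goes: $\sum_t n_t(x^1,x^2)^2$ does count pairs of apexes $x^3,x^4$ with $\det(x^2-x^1,\,x^3-x^4)=0$, the averaging over primitive bases is legitimate, and your bound $P_0\ge |E|\bigl(|E|-p^{2l-2}\bigr)\ge (1-p^{-3/2})|E|^2$ is correct under the hypothesis. But the proof stops exactly where the theorem starts: all of the content has been shifted onto the claim $Q\le c\,|E|^4/q$ with an explicit constant, and that claim is never proved --- everything after ``Bounding $Q$ is the crux'' is in the conditional mood (``I would stratify\dots'', ``should deliver''). The tools you name are also not obviously the right ones: after dualizing the character sum, the nonzero frequencies do not produce quadratic Gauss sums but rather the Fourier mass of $E$ on the punctured lines $\{s\,w^{\perp}\}$, stratified by $v_p(s)+v_p(w)$. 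For instance the bottom stratum $v_p(s)=l-1$ contributes exactly $\frac1q\bigl(pN_p-|E|^4\bigr)$, where $N_p$ counts quadruples with $\det(b-a,\,c-d)\equiv 0 \bmod p$; bounding $pN_p$ by a fixed multiple of $|E|^4$ amounts to controlling how much of $E$ can concentrate on cosets of the thickened lines $\{(x,y):y\equiv\alpha x \bmod p\}$, and the naive bounds (each such coset holds at most $p^{2l-1}<p^{-1/2}|E|$ points, and the direction-class masses sum to $|E|^2$) lose a factor of $\sqrt p$ against the main term unless one exploits that $E$ cannot be simultaneously concentrated in many direction classes. This is genuinely delicate bookkeeping that the proposal does not carry out. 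Finally, the stated constant is unsubstantiated: to prove the theorem as stated you need $|E|^2P_0/Q\ge \frac q4\frac{1+p}p$, i.e.\ $Q\le \frac{4p}{1+p}\,P_0|E|^2/q$ with that precise constant, and nothing in your sketch produces the quantity $\frac{1+p}{p}$ --- your closing sentence simply asserts that the constants ``should'' come out right.

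It is worth seeing why the paper's constant is what it is, because the paper takes a completely different, Fourier-free route. It averages the number of solutions of $l_2-l_1=e_1-e_2$ over the family $\mathcal{L}_0$ of the $p^l+p^{l-1}$ full-length lines through the origin, using the valuation-stratified pair counts $s_i$ and the estimate $s=\sum_i s_ip^i\le 2p^{4l-1}\le 2|E|^2$ (this is precisely where $|E|>p^{2l-\frac12}$ enters); pigeonholing yields one line $L$ with at most $4|E|^2\frac{p}{1+p}$ solutions, whence $|E+L|\ge \frac{q^2}{4}\frac{1+p}{p}$ by Cauchy--Schwarz, so $E$ meets at least $\frac q4\frac{1+p}p$ translates of $L$. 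One translate must contain two points of $E$ whose difference generates $L$ (otherwise $|E|\le qp^{l-1}=p^{2l-1}$, contradicting the hypothesis), and apexes on distinct translates then give distinct determinants over that fixed base, yielding $\frac q4\frac{1+p}p-1$ nonzero areas. So the factor $\frac{1+p}{p}$ is the line count $|\mathcal{L}_0|/p^l$ and the $\frac14$ comes from the solution-count bound $4|E|^2q$; your energy scheme, even if completed, would produce its own constants with no reason to match these. As it stands, your proposal reduces the theorem to an unproven and quantitatively delicate estimate, so it is incomplete.
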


\vskip.125in

The following theorem can be seen as an improvement on the dot product result for an arbitrary subset $E$ of $\mathbb{Z}_q^d$ given in \cite{CIP}. For $E\subset \mathbb{Z}_q^d$, $q=p^l$, the authors in \cite{CIP} prove that if $|E|\gg lq^{\frac{(2l-1)d}{2l}+\frac{1}{2l}}$, then $\prod(E)\supset \mathbb{Z}_q^{*}.$ We take the set $E$ as a $d$-fold product of a subset $A$ of $\mathbb{Z}_q$, and remove the factor $l$ on the size of $E$ to get a positive proportion of the elements of $\mathbb{Z}_q$.

\vskip.125in

\begin{theorem}\label{dot E}
Let $E=\underbrace{A\times \ldots \times A}_\text{\text{d-fold}}$ be a subset $\mathbb{Z}_q^d$ , where $A\subset \mathbb{Z}_q$, $q=p^l$. Suppose $|E|\geq q^{d(\frac{2l-1}{2l})+\frac{1}{2l}}$. Then $\prod(E)\gtrsim q.$
\end{theorem}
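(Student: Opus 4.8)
The plan is to reduce the problem to an energy (second-moment) estimate and then exploit the product structure of $E$ coordinate by coordinate. For $t\in\mathbb{Z}_q$ put $\nu(t)=|\{(x,y)\in E\times E : x\cdot y=t\}|$, so that $\sum_t\nu(t)=|E|^2$ and $\prod(E)=\{t:\nu(t)>0\}$. Cauchy--Schwarz gives
\[
|E|^4=\Big(\sum_{t:\,\nu(t)>0}\nu(t)\Big)^2\le |\textstyle\prod(E)|\sum_t\nu(t)^2,
\]
so it suffices to show that $\Lambda:=\sum_t\nu(t)^2\lesssim |E|^4/q$, where $\Lambda$ counts the quadruples $(x,y,x',y')\in E^4$ with $x\cdot y=x'\cdot y'$. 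From $\Lambda\lesssim |E|^4/q$ the displayed inequality immediately yields $|\prod(E)|\gtrsim q$.

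Next I would introduce a nontrivial additive character $\chi$ on $\mathbb{Z}_q$ and use orthogonality to write $\Lambda=\frac1q\sum_{s\in\mathbb{Z}_q}|S(s)|^2$ with $S(s)=\sum_{x,y\in E}\chi(s\,x\cdot y)$. The essential feature of $E=A\times\cdots\times A$ is that this exponential sum factors: since $x\cdot y=\sum_{i=1}^d x_iy_i$ with the $x_i,y_i$ ranging independently over $A$, one obtains $S(s)=f(s)^d$, where $f(s)=\sum_{a,b\in A}\chi(s\,ab)$. The term $s=0$ contributes exactly $|E|^4/q$ to $\Lambda$, which is the desired main term, so the whole matter reduces to proving $\frac1q\sum_{s\neq0}|f(s)|^{2d}\lesssim |E|^4/q$.

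The main obstacle is bounding $f(s)$ when $s$ is a zero divisor, which is where the cyclic-ring arithmetic enters. I would stratify the nonzero $s$ by $p$-adic valuation $j=v_p(s)$, $0\le j\le l-1$, there being at most $p^{l-j}$ of them. Writing $f(s)=\sum_{a\in A}h(sa)$ with $h(c)=\sum_{b\in A}\chi(cb)$, an application of Cauchy--Schwarz in $a$ together with Plancherel's identity ($\sum_c|h(c)|^2=q|A|$) and the fact that $a\mapsto sa$ is $p^j$-to-one onto $p^j\mathbb{Z}_q$ gives the uniform estimate $|f(s)|\le |A|(p^jq)^{1/2}$. Substituting this bound yields
\[
\sum_{s\neq0}|f(s)|^{2d}\le\sum_{j=0}^{l-1}p^{l-j}\big(|A|^2p^jq\big)^d=|E|^2q^{d+1}\sum_{j=0}^{l-1}p^{j(d-1)}.
\]
Since $d\ge 2$ and $p$ is odd, the geometric sum $\sum_j p^{j(d-1)}$ is dominated by its top term $p^{(l-1)(d-1)}$ up to the absolute constant $(1-p^{-(d-1)})^{-1}$. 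This is precisely the step that removes the factor $l$ present in the general estimate of \cite{CIP}: rather than $l$ comparable strata, the factorization $S(s)=f(s)^d$ forces geometric decay in $j$. (The degenerate case $d=1$ is immediate, since there the hypothesis forces $A=\mathbb{Z}_q$.)

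Finally I would assemble the pieces. The off-origin contribution to $\Lambda$ is $\lesssim |E|^2q^{d}p^{(l-1)(d-1)}$, and this is $\lesssim |E|^4/q$ exactly when $|E|^2\gtrsim q^{d+1}p^{(l-1)(d-1)}$, i.e. when $|E|\gtrsim p^{dl-d/2+1/2}=q^{d(2l-1)/(2l)+1/(2l)}$, which is the hypothesis. Hence $\Lambda\lesssim|E|^4/q$, and the Cauchy--Schwarz bound of the first paragraph gives $\prod(E)\gtrsim q$, as claimed. The only genuinely delicate point is the character-sum bound for zero-divisor $s$ and the bookkeeping across valuation strata; everything else is routine.
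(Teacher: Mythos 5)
Your proposal is correct and arrives at exactly the paper's threshold, but it takes a genuinely different route. Two differences stand out. First, you bound the full fourth moment $\sum_t \nu(t)^2 = \frac{1}{q}\sum_s |S(s)|^2$ directly by orthogonality, whereas the paper first applies an extra Cauchy--Schwarz in $x$, reducing to the triple sum $|E|\sum_{x\cdot y = x\cdot y'} E(x)E(y)E(y')$ and thence to quantities of the form $\sum_x E(x)|\widehat{E}(sx)|^2$. Second, and more substantively, the product structure of $E$ enters your argument \emph{algebraically}, through the factorization $S(s)=f(s)^d$ with $f(s)=\sum_{a,b\in A}\chi(sab)$, followed by the Cauchy--Schwarz/Plancherel bound $|f(s)|\le |A|(p^j q)^{1/2}$ on each valuation stratum $v_p(s)=j$ (a bound you verify correctly: the map $a\mapsto sa$ is $p^j$-to-one onto $p^j\mathbb{Z}_q$, and $\sum_c |h(c)|^2 = q|A|$); the paper instead uses the product structure \emph{geometrically}, via the intersection bound $|E\cap l_x^i|\le \min\{p^{l-i},|A|\}$ for the dilate sets $l_x^i=\{s'x: s'\in \mathbb{Z}_{p^{l-i}}^*\}$, combined with the estimate $\sum_x |\widehat{E}(p^i x)|^2 \le |E|q^{-d}p^{id}$. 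In both arguments the removal of the factor $l$ from \cite{CIP} comes from geometric rather than flat decay across the valuation strata --- in yours from $\sum_j p^{j(d-1)} \lesssim p^{(l-1)(d-1)}$ when $d\ge 2$, in the paper's from $\sum_i \min\{p^{l-i},p^{\alpha}\}p^{id}$ --- and your final bookkeeping is right: $|E|^2 \gtrsim q^{d+1}p^{(l-1)(d-1)} = p^{2ld-d+1}$ matches the hypothesis $|E|\ge q^{d(\frac{2l-1}{2l})+\frac{1}{2l}}$ exactly. What each approach buys: your factorization is shorter and reduces everything to a one-dimensional exponential sum, but it is rigid in that it uses $E$ being exactly a Cartesian power; the paper's formulation isolates the property actually needed (that $E$ meets each set $l_x^i$ in at most $|A|$ points), so it would extend to non-product sets with small line intersections. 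Your aside on the degenerate case $d=1$ is also correct, and I see no gap in any step.
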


\vskip.25in 

\subsection{Fourier Analysis in $\mathbb{Z}_q^d$}
 Let $ f,g : \mathbb {Z}_{q}^d\to \mathbb{C}$. The Fourier transform of $f$ is defined as 
$$\widehat {f}(m)=q^{-d}\sum_{x\in \mathbb {Z}_{q}^d}\chi(-x.m)f(x),$$
where $\chi(z)=exp(2\pi iz/q).$ 

We use the following properties:
\begin{equation*} 
  q^{-d}\sum_{x\in \mathbb {Z}_{q}^d}\chi(x.m) = \left\{
    \begin{array}{rl}
      1, & \text{if } m=0\\
      0, & otherwise
      \end{array} \right.  \qquad \text{(Orthogonality)}
\end{equation*}

\begin{equation*}
f(x)=\sum_{m\in \mathbb {Z}_{q}^d}\chi(x.m)\widehat{f}(m) \qquad \text{(Inversion)}
\end{equation*}
\begin{equation*}
\sum_{m\in \mathbb {Z}_{q}^d}|\widehat{f}(m)|^2=q^{-d}\sum_{x\in \mathbb {Z}_{q}^d}|f(x)|^2. \qquad \text{(Plancherel)}
\end{equation*}

\vskip.125in 

\section{Proof of Theorem \ref{triconfig}}
For the proof of Theorem \ref{triconfig} we will need the following lemmas.

Let us first denote by $SO_2({\mathbb{Z}_q})=\{A\in M_2(\mathbb{Z}_q): AA^{T}=I, \; det(A)=1\}$ the special orthogonal group.
\begin{lemma}\label{nonzero norm} Let $\xi=(\xi_1,\xi_2) \in \mathbb{Z}_q^2$, where $q=p^l$  and $p$ is an odd prime. If $\|\xi\|=\xi_1^2+\xi_2^2 \ne 0$, then $|Stab(\xi)|\leq p^{l-1}$, where $Stab$ is the stabilizer under the action of the special orthogonal group. 
\end{lemma}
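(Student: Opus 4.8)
The plan is to reduce the stabilizer computation to a linear-algebra count over the ring $\mathbb{Z}_q = \mathbb{Z}/p^l$. First I would pin down the shape of the elements of $SO_2(\mathbb{Z}_q)$: since $AA^T = I$ forces $A^{-1} = A^T$, comparing $A^{-1} = (\det A)^{-1}\operatorname{adj}(A)$ with $A^T$ and using $\det A = 1$ shows that every $A \in SO_2(\mathbb{Z}_q)$ has the rotation form $A = \left(\begin{smallmatrix} a & -b \\ b & a\end{smallmatrix}\right)$ with $a^2 + b^2 = 1$. This identifies $SO_2(\mathbb{Z}_q)$ with the norm-one elements $a+bi$ of $\mathbb{Z}_q[i]$ and the action on $\xi$ with multiplication by $a+bi$ on $\xi_1+\xi_2 i$; I will only need the matrix description, but the complex picture is a useful sanity check.

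Next I would translate the fixed-point condition $A\xi = \xi$ into $(A-I)\xi = 0$, which is the homogeneous linear system
\[
\begin{pmatrix} \xi_1 & -\xi_2 \\ \xi_2 & \xi_1\end{pmatrix}\begin{pmatrix} a-1 \\ b\end{pmatrix} = 0 .
\]
Call the coefficient matrix $M$; its determinant is exactly $\xi_1^2 + \xi_2^2 = \|\xi\|$. Since $A$ is determined by $(a,b)$, the assignment $A \mapsto (a-1,b)$ is an injection of $\operatorname{Stab}(\xi)$ into $\ker M \subset \mathbb{Z}_q^2$; hence $|\operatorname{Stab}(\xi)| \le |\ker M|$, and it suffices to bound the size of this kernel. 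The norm-one relation $a^2+b^2=1$ is what makes the inclusion generally strict, but discarding it only weakens the bound, which is all the statement asks for.

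The remaining step is to show $|\ker M| = p^{v}$, where $v = v_p(\|\xi\|)$ is the $p$-adic valuation of $\det M$. I would lift $M$ to an integer matrix $\tilde M$ of the same shape and bring it to Smith normal form $\tilde M = U\,\operatorname{diag}(s_1,s_2)\,V$ with $U,V$ invertible over $\mathbb{Z}$; reducing mod $p^l$ keeps $U,V$ invertible over $\mathbb{Z}_q$, so $\ker M \cong \ker \operatorname{diag}(\bar s_1,\bar s_2)$. Writing $e_i = v_p(s_i)$, we have $e_1+e_2 = v_p(s_1 s_2) = v_p(\det \tilde M) = v$, where the last equality holds because $\det\tilde M \equiv \|\xi\| \pmod{p^l}$ and $v < l$. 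Since $\|\xi\| \ne 0$ in $\mathbb{Z}_{p^l}$ this common value satisfies $v \le l-1$, so in particular each $e_i \le l-1 < l$. The kernel of multiplication by $\bar s_i$ on $\mathbb{Z}_q$ (a unit times $p^{e_i}$, with $e_i < l$) has exactly $p^{e_i}$ elements, whence $|\ker M| = p^{e_1+e_2} = p^{v} \le p^{l-1}$. Combining with the previous step yields $|\operatorname{Stab}(\xi)| \le p^{l-1}$.

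The main obstacle I anticipate is the kernel count over the non-domain $\mathbb{Z}/p^l$: one must be careful that the product of elementary divisors matches the determinant and that no elementary divisor wraps around (i.e. that $e_i < l$), which is precisely where the hypothesis $\|\xi\| \ne 0$ is used. It is worth recording that, working inside $\mathbb{Z}_q[i]$, one can in fact compute $|\operatorname{Stab}(\xi)|$ exactly — writing $\zeta = \xi_1 + \xi_2 i$, the condition becomes $(z-1)\zeta = 0$ with $N(z)=1$, giving $p^{v/2}$ when $p \equiv 3 \bmod 4$ (where $v$ is forced to be even) and $p^{\min(k_1,k_2)}$ with $k_1+k_2 = v$ when $p \equiv 1 \bmod 4$ — but the crude bound $|\operatorname{Stab}(\xi)| \le |\ker M| = p^{v} \le p^{l-1}$ already suffices and avoids splitting into cases on $p \bmod 4$.
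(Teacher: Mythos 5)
Your proof is correct, but it takes a genuinely different route from the paper's. Both arguments begin the same way, reducing $A\xi=\xi$ for $A=\left(\begin{smallmatrix} a & -b\\ b & a\end{smallmatrix}\right)$ to a linear system in $(a-1,b)$; but the paper does not solve that system exactly. Instead it multiplies the two equations by $\xi_1,\xi_2$ and adds, deriving only the weaker consequences $(a-1)\|\xi\|\equiv 0$ and $b\|\xi\|\equiv 0 \pmod{p^l}$, which with $\|\xi\|=p^{v}u$ give $p^{v}$ candidates for each of $a-1$ and $b$ (so $p^{2v}$ pairs); it then crucially invokes the relation $a^2+b^2\equiv 1$ and a Hensel-flavored uniqueness computation (showing $p\nmid p^{l-v}(k_0+k)+2$ since $p$ is odd) to prove that $b$ determines $a$, cutting the count to $p^{v}\leq p^{l-1}$. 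You instead discard the quadratic relation entirely and count the full kernel of $M=\left(\begin{smallmatrix} \xi_1 & -\xi_2\\ \xi_2 & \xi_1\end{smallmatrix}\right)$ exactly: via Smith normal form of an integer lift, $|\ker M|=p^{e_1+e_2}=p^{v_p(\det M)}=p^{v}$, where your observations that $\det\tilde M\equiv\|\xi\|\pmod{p^l}$ with $v<l$ pins down the valuation, and $e_i\leq v<l$ prevents any elementary divisor from wrapping around mod $p^l$ --- these are exactly the delicate points and you handle them correctly. The two approaches land on the identical bound $p^{v}\leq p^{l-1}$, which is a nontrivial coincidence: your kernel count shows the linear system alone already forces $p^{v}$, so the orthogonality constraint the paper leans on is superfluous for this bound. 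What your route buys is generality and cleanliness (the argument works for any $2\times 2$ system over $\mathbb{Z}_{p^l}$ with nonzero determinant, and your closing $\mathbb{Z}_q[i]$ remark even computes $|Stab(\xi)|$ exactly, refining the lemma); what the paper's route buys is self-containedness --- no Smith normal form machinery --- and its Hensel-style manipulation previews the technique reused in the proof of Lemma 2.2 for the null vectors, where a kernel count alone would no longer suffice.
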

\begin{proof}
Let $\xi=(x,y)\in \mathbb{Z}_q^2$. Since $\|\xi\|\ne0$, we can write $\|\xi\|=x^2+y^2=p^iu,\; 0\leq i\leq l-1,\; u\in \mathbb{Z}_q^*.$ Now if $A=\begin{bmatrix}a&b\\-b&a  \end{bmatrix}\in SO_2(\mathbb{Z}_q)$ fixes $\xi$, then from the identity 
\begin{equation*}
\begin{bmatrix}a&b\\-b&a \end{bmatrix}\begin{bmatrix}x\\y\end{bmatrix}=\begin{bmatrix}x\\y\end{bmatrix}
\end{equation*}
we get 
\begin{eqnarray*}
(a-1)x+by&=&0\\
-bx+(a-1)y&=&0,
\end{eqnarray*}
and hence
\begin{eqnarray}\label{a,b}
(a-1)(x^2+y^2)&\equiv 0&\;\text{mod}\;{p^l}, \nonumber \\
b(x^2+y^2)&\equiv 0&\; \text{mod}\; {p^l}. 
\end{eqnarray}
Putting $x^2+y^2=p^iu$ in (\ref{a,b}), we have
 \begin{equation}\label{a-b}
 a=p^{l-i}k+1,\; 0 \leq k<p^i,\; \text{and} \;\; b=p^{l-i}m,\; 0\leq m<p^i,
 \end{equation}
 where 
 \begin{equation}\label{detA}
 a^2+b^2\equiv 1\; \text{mod}\;{p^l}.
 \end{equation}

Now to conclude the argument, we claim that  for $a_0\ne a$ if $b_0$ and $b$ are satisfying $a_0^2+b_0^2\equiv 1\; \text{mod}\; {p^{l}}$ and $a^2+b^2\equiv 1\;  \text{mod}\; {p^{l}}$ and condition (\ref{a-b}), respectively, then $b_0\ne b$. This will prove the lemma, for then the number of pairs $(a,b)$ satisfying the conditions (\ref{a-b}) and (\ref{detA}) is at most the number of possibilities of $b$ which is  $p^i$.  This is at most  $p^{l-1}$ as the valuation of a nonzero element is at most $l-1$.

It remains to prove the claim and we will prove its contrapositive here. Suppose that $b_0=b$ and  $a_0^2+b_0^2\equiv 1\;\text{mod}\;{p^{l}}$,\; $a^2+b^2\equiv 1\;\text{mod}\;{p^{l}}$, so that $a_0^2\equiv a^2\;\text{mod}\;{p^l}$.  
Writing $a_0=p^{l-i}k_0+1$ and $a=p^{l-i}k+1$, It follows that 
\begin{eqnarray*}
(p^{l-i}k_0+1)^2&\equiv&(p^{l-i}k+1)^2\;\text{mod}\;{p^l},\\
p^{2l-2i}k_0^2+2p^{l-i}k_0&\equiv&p^{2l-2i}k^2+2p^{l-i}k\;\text{mod}\;{p^l},
\end{eqnarray*}
therefore,
\begin{eqnarray*}
p^{2l-2i}(k_0^2-k^2)+2p^{l-i}(k_0-k)&\equiv&0\; \text{mod}\;{p^l},\\
p^{l-i}(k_0-k)(p^{l-i}(k_0+k)+2)&\equiv&0\;\text{mod}\;{p^l}.
\end{eqnarray*}
Thus $p^{l} \mid p^{l-i}(k_0-k)(p^{l-i}(k_0+k)+2)$, and since $p\nmid p^{l-i}(k_0+k)+2$ as $p$ is odd, we must have $p^i\mid k_0-k<p^i$. Hence we have $k_0-k=0$, i.e., $k_0=k$ and therefore $a_0=a$.
\end{proof}

\begin{lemma}\label{zero norm}
 Let $\xi \in \mathbb{Z}_q^2\setminus(0,0)$, where $q=p^l$ and $p\equiv 3\;\text{mod}\;4$. If $\|\xi\|=0$, then $|Stab(\xi)|\leq p^{l-1}.$
\end{lemma}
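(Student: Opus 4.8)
The plan is to mirror the proof of Lemma \ref{nonzero norm}, but to first exploit the hypothesis $p\equiv 3\bmod 4$ in order to control how divisible the coordinates of $\xi$ must be. Write $\xi=(x,y)$ and let $m=\min\{\nu(x),\nu(y)\}$, where $\nu$ denotes the $p$-adic valuation; since $\xi\neq(0,0)$ we have $0\le m\le l-1$. After relabeling assume $\nu(x)=m$, and write $x=p^m x_0$, $y=p^m y_0$ with $x_0$ a unit and $\nu(y_0)\ge 0$. The first key step is the observation that $x_0^2+y_0^2$ is a unit: if $\nu(y_0)\ge 1$ then $x_0^2+y_0^2\equiv x_0^2\not\equiv 0\bmod p$, while if $\nu(y_0)=0$ then $x_0^2+y_0^2\equiv 0\bmod p$ would force $-1$ to be a square mod $p$, contradicting $p\equiv 3\bmod 4$. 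As a byproduct $\nu(x^2+y^2)=2m$, so the hypothesis $\|\xi\|=0$ is exactly the statement $2m\ge l$; but what the argument really uses is that $x_0^2+y_0^2$ is a unit.

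Next I would set up the stabilizer equations as in Lemma \ref{nonzero norm}: if $A=\begin{bmatrix}a&b\\-b&a\end{bmatrix}\in SO_2(\mathbb{Z}_q)$ fixes $\xi$, then $(a-1)x+by\equiv 0$ and $-bx+(a-1)y\equiv 0$ modulo $p^l$. Dividing through by $p^m$ turns these into $(a-1)x_0+by_0\equiv 0$ and $-bx_0+(a-1)y_0\equiv 0$ modulo $p^{l-m}$. Taking the linear combinations $x_0\cdot(\text{first})+y_0\cdot(\text{second})$ and $y_0\cdot(\text{first})-x_0\cdot(\text{second})$ gives
\[
(a-1)(x_0^2+y_0^2)\equiv 0 \quad\text{and}\quad b(x_0^2+y_0^2)\equiv 0 \pmod{p^{l-m}},
\]
and since $x_0^2+y_0^2$ is a unit we conclude $a\equiv 1\bmod p^{l-m}$ and $b\equiv 0\bmod p^{l-m}$.

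The final step is a counting argument. Writing $a=1+p^{l-m}k$ and $b=p^{l-m}t$ with $0\le k,t<p^m$, I would impose the determinant relation $a^2+b^2\equiv 1\bmod p^l$. Expanding and dividing by $p^{l-m}$ reduces this to
\[
2k+p^{l-m}(k^2+t^2)\equiv 0 \pmod{p^m}.
\]
Since $m\le l-1$ we have $l-m\ge 1$, so modulo $p$ this reads $2k\equiv 0$, which has the single simple root $k\equiv 0$; as its derivative $2+2p^{l-m}k$ is a unit, Hensel's lemma lifts it to a unique $k$ modulo $p^m$ for each fixed $t$. Hence there are at most $p^m\le p^{l-1}$ admissible pairs $(a,b)$, giving $|Stab(\xi)|\le p^{l-1}$.

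The main obstacle is the first step: everything hinges on showing that $x_0^2+y_0^2$ is a unit, and this is precisely where the hypothesis $p\equiv 3\bmod 4$ (equivalently, the anisotropy of the form $u^2+v^2$ over $\mathbb{F}_p$) is indispensable. Without it an isotropic $\xi$ could have a much larger stabilizer. The remaining steps are routine manipulations of congruences together with a standard Hensel-type lifting, closely paralleling the computation already carried out in Lemma \ref{nonzero norm}.
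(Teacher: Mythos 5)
Your proof is correct, and it takes a genuinely different route from the paper's. The paper first uses $p\equiv 3\bmod 4$ to write $\xi=(p^m u,p^m v)$ with $u,v$ units and $m\geq l/2$, then argues \emph{geometrically}: any $A\in SO_2(\mathbb{Z}_q)$ fixing $\xi$ also fixes the perpendicular vector $\eta=(-p^m v,p^m u)$, hence fixes the full submodule $p^m(\mathbb{Z}_{p^l}\times\mathbb{Z}_{p^l})$ pointwise; it then reduces to the worst case $m=l-1$ (the stabilizer of a smaller submodule is larger), deduces $A\equiv I_2\bmod p$, and counts matrices with $a\equiv 1,\ b\equiv 0 \bmod p$ and $a^2+b^2=1$ by fixing $b$ ($p^{l-1}$ choices) and lifting $a$ uniquely via Hensel applied to $x^2-(1-b^2)$. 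You instead rerun the congruence computation of Lemma \ref{nonzero norm} on the primitive vector $(x_0,y_0)$: the hypothesis $p\equiv 3\bmod 4$ enters in both arguments at the same conceptual point (anisotropy of $u^2+v^2$ over $\mathbb{F}_p$), but you use it to make $x_0^2+y_0^2$ a unit, which upgrades the fixing equations to the congruences $a\equiv 1,\ b\equiv 0\bmod p^{l-m}$, and your Hensel count (fix $t$, i.e.\ $b$; the reduced equation $2k+p^{l-m}(k^2+t^2)\equiv 0\bmod p^m$ reads $2k\equiv 0$ mod $p$ with unit derivative since $p$ is odd) gives $|Stab(\xi)|\leq p^m\leq p^{l-1}$, using only $m\leq l-1$, which follows from $\xi\neq(0,0)$. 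Your version buys a sharper per-vector bound $p^m$ rather than the uniform $p^{l-1}$, avoids the perpendicular/span argument and the paper's somewhat informal monotonicity reduction to $m=l-1$, and treats the zero-norm and nonzero-norm lemmas by one unified mechanism; the paper's version buys structural information, identifying the stabilizer of an isotropic vector as the stabilizer of a full submodule, i.e.\ a congruence subgroup $\Gamma_1\cap SO_2(\mathbb{Z}_q)$, whose size it computes exactly in the extremal case. All of your intermediate steps (division of the fixing equations by $p^m$, the two linear combinations, the expansion of $a^2+b^2\equiv 1\bmod p^l$ and division by $p^{l-m}$, and the uniqueness in Hensel's lemma, since every root mod $p^m$ must reduce to the simple root $k\equiv 0$ mod $p$) check out.
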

For the proof of Lemma \ref{zero norm} we will use Hensel's Lemma.
\begin{lemma}[Hensel's Lemma]
Let $f(x)\in\mathbb{Z}[x]$, $f(r)\equiv 0\;\text{mod}\;p$ and $f'(r)\not\equiv0\;\text{mod}\;p$ so that r is a simple root of $f$ modulo $p$. Then for any $k\geq2$, there exists a unique $\hat{r}$ in $\mathbb{Z}_{p^k}$ such that $f(\hat{r})\equiv 0\;\text{mod}\;{p^k}$ with $\hat{r}\equiv r \;\text{mod}\;p$.
\end{lemma}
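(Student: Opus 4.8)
The plan is to prove both existence and uniqueness by induction on $k$, lifting a root one power of $p$ at a time via a Newton-type step. The starting data is the simple root $r$ modulo $p$ supplied by the hypothesis; at each stage I assume I have a root modulo $p^{k-1}$ that is congruent to $r$ modulo $p$, and I produce a unique refinement modulo $p^k$.

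The engine of the lifting is the polynomial Taylor identity. For any $f\in\mathbb{Z}[x]$ and any integers $a,h$, expanding each monomial $(a+h)^n=\sum_{j}\binom{n}{j}a^{n-j}h^j$ by the binomial theorem and collecting terms gives
$$f(a+h)=f(a)+f'(a)\,h+h^2\,g(a,h),$$
where $g(a,h)\in\mathbb{Z}[a,h]$. I would first record this identity, since it is what keeps every quantity integral and isolates the linear term that Newton's method exploits.

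For the inductive step, suppose $\hat{r}_{k-1}\in\mathbb{Z}$ satisfies $f(\hat{r}_{k-1})\equiv0\;\text{mod}\;p^{k-1}$ and $\hat{r}_{k-1}\equiv r\;\text{mod}\;p$. I search for a lift of the form $\hat{r}_k=\hat{r}_{k-1}+t\,p^{k-1}$ with $t\in\{0,\ldots,p-1\}$. Substituting into the Taylor identity with $a=\hat{r}_{k-1}$ and $h=t\,p^{k-1}$, the quadratic remainder is divisible by $p^{2(k-1)}$, and since $k\geq2$ we have $2(k-1)\geq k$, so this remainder vanishes modulo $p^k$. Writing $f(\hat{r}_{k-1})=p^{k-1}c$, I obtain
$$f(\hat{r}_k)\equiv p^{k-1}\bigl(c+t\,f'(\hat{r}_{k-1})\bigr)\;\text{mod}\;p^k.$$
Because $\hat{r}_{k-1}\equiv r\;\text{mod}\;p$, the hypothesis $f'(r)\not\equiv0\;\text{mod}\;p$ forces $f'(\hat{r}_{k-1})$ to be a unit modulo $p$; hence the congruence $c+t\,f'(\hat{r}_{k-1})\equiv0\;\text{mod}\;p$ has the unique solution $t\equiv -c\,f'(\hat{r}_{k-1})^{-1}\;\text{mod}\;p$. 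This simultaneously produces a lift (existence) and shows it is the only lift of this form (uniqueness modulo $p^k$).

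Finally I would assemble the induction. Uniqueness at each level makes the sequence $\hat{r}_1=r,\hat{r}_2,\hat{r}_3,\ldots$ well defined, and the residue of $\hat{r}_k$ modulo $p^k$ is the desired $\hat{r}\in\mathbb{Z}_{p^k}$; any other root of $f$ modulo $p^k$ congruent to $r$ modulo $p$ would, upon reduction modulo each intermediate $p^j$, coincide with $\hat{r}_j$ by the uniqueness just established, and so must equal $\hat{r}$. The only place demanding care is confirming that the quadratic remainder really is killed modulo $p^k$, which is exactly the inequality $2(k-1)\geq k$, and keeping the whole argument inside $\mathbb{Z}$ before reducing, so that the inverse $f'(\hat{r}_{k-1})^{-1}$ is only ever invoked modulo $p$. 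I do not expect a serious obstacle here: the entire content sits in the single Newton step, and the induction is bookkeeping.
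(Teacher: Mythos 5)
Your proof is correct, and there is in fact nothing in the paper to compare it against: the paper states Hensel's Lemma without proof, invoking it as a standard fact in the proof of Lemma 2.2, so you have supplied the canonical argument the paper leaves implicit. Your Newton-step induction is the classical proof and every detail checks out: the integral Taylor identity $f(a+h)=f(a)+f'(a)h+h^2g(a,h)$ with $g\in\mathbb{Z}[a,h]$, the inequality $2(k-1)\geq k$ for $k\geq 2$ that kills the quadratic remainder, the invertibility of $f'(\hat{r}_{k-1})$ modulo $p$ (which follows from $\hat{r}_{k-1}\equiv r\;\text{mod}\;p$ since $f'$ has integer coefficients), and the uniqueness bookkeeping, where any root modulo $p^k$ congruent to $r$ modulo $p$ first reduces to $\hat{r}_{k-1}$ by the level-$(k-1)$ inductive uniqueness and is then pinned down by the unique choice of $t$ modulo $p$.
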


\begin{proof}[Proof of Lemma \ref{zero norm}]
We first note that as $p\equiv 3\;\text{mod}\;4$, for $\xi \in \mathbb{Z}_q^2\setminus(0,0)$, $\|\xi\|=0$ implies that $\xi=(p^mu, p^m v)$ for some $m\geq\frac{l}{2}$ and $u,v\in \mathbb{Z}_q^{*}$. Now if $A \in SO_2(\mathbb{Z}_q)$ fixes $\xi=(p^mu, p^m v)$, it can be readily shown that it also fixes $\eta=(-p^mv,p^mu)$. Hence $A$ also fixes $Span\{\xi, \eta\}=p^m(\mathbb{Z}_{p^l}\times\mathbb{Z}_{p^l})\cong \mathbb{Z}_{p^{l-m}}\times \mathbb{Z}_{p^{l-m}}.$

Since $\xi\ne(0,0)$, we have $m\leq l-1$. We shall note that  $\mathbb{Z}_{p^{l-m}}\times \mathbb{Z}_{p^{l-m}}$ is smallest, and hence the number of matrices $A$ that fixes  $\mathbb{Z}_{p^{l-m}}\times \mathbb{Z}_{p^{l-m}}$ is largest, when $m=l-1$. Therefore it is sufficient to consider the case $m=l-1$. In this case A fixes $p^{l-1}(\mathbb{Z}_{p^l}\times\mathbb{Z}_{p^l})\cong \mathbb{Z}_{p}\times \mathbb{Z}_{p}.$

We now write $$A=I_2+B,$$ where $I_2$ denotes the $2\times 2$ identity matrix and $B\in M_2(\mathbb{Z}_q)$. Then for any $y\in \mathbb{Z}_q^2$ we have 
$$Ap^{l-1}y=p^{l-1}y+Bp^{l-1}y,$$
so that $Bp^{l-1}y=0$ as $A$ fixes $p^{l-1}y$.
This implies that $B=pB'$ for some $B'\in M_2(\mathbb{Z}_q),$ and 
$$A=I_2+pB'\in \Gamma_{1}\cap SO_2(\mathbb{Z}_q).$$
 where $\Gamma_{1}$ denotes the matrices in $M_2(\mathbb{Z}_q)$ congruent to $I_2\;\text{mod}\; p$.

It follows that 
\begin{equation}\label{matrixset}
A\in \left\{ \begin{bmatrix}
a&-b\\
b&a
\end{bmatrix}
: a,b\in \mathbb{Z}_q,\;a^2+b^2\equiv 1\;\text{mod}\;q,\; a\equiv 1\;\text{mod}\;p,\;b\equiv 0\;\text{mod}\;p  \right\}.
\end{equation}
Now we count the number of matrices in (\ref{matrixset}). We first fix $b$. Since $b\equiv 0\;\text{mod}\;p$, we have $p^{l-1}$ choices for $b$.  Then we consider the polynomial $f(x)=x^2-(1-b^2)\in\mathbb{Z}[x]$. Note that $f(x)=x^2-1$ in $\mathbb{Z}_p$ as $b\equiv 0\;\text{mod}\;p$. Hence $1$ is a root of $f(x)$ and $f'(1)=2\ne 0$ in $\mathbb{Z}_p$ as $p$ is odd.  Hence by Hensel's Lemma there exists a unique $a$ in $\mathbb{Z}_q$ such that $f(a)=a^2-(1-b^2)=0$ in $\mathbb{Z}_q$ with $a \equiv 1\bmod p$. Therefore the number matrices of the form in (\ref{matrixset}) is $p^{l-1}$. This completes the proof.

\end{proof}

We make use of the following lemma from \cite{BHIPR13}.
\begin{lemma}\label{taylor} For any finite space $F$, any function $f : F\to \mathbb{R}_{\geq 0}$, and any $n\geq 2$ we have
$$\sum_{z\in F}f^n(z)\leq|F|\left(\frac{\|f\|_1}{|F|}\right)^{n}+\frac{n(n-1)}{2}\|f\|_{\infty}^{n-2}\sum_{z\in F}\left(f(z)-\frac{\|f\|_1}{|F|}\right)^2, $$

where $\|f\|_1=\sum_{z\in F}|f(z)|,$ and $\|f\|_{\infty}=max_{z\in F}f(z).$
\end{lemma}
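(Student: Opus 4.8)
The plan is to apply Taylor's theorem with Lagrange remainder to the single-variable function $g(t)=t^n$, expanded about the mean value $\mu:=\|f\|_1/|F|$. First I would set $M:=\|f\|_\infty$ and observe that, since $f\geq 0$, both $f(z)\in[0,M]$ for every $z\in F$ and $\mu\in[0,M]$. Then for each fixed $z\in F$, Taylor's theorem applied to $g$ at the base point $\mu$ and evaluated at $t=f(z)$ produces an intermediate point $\xi_z$ lying between $\mu$ and $f(z)$ (hence $\xi_z\in[0,M]$) such that
$$f(z)^n=\mu^n+n\mu^{n-1}\bigl(f(z)-\mu\bigr)+\frac{n(n-1)}{2}\,\xi_z^{\,n-2}\bigl(f(z)-\mu\bigr)^2,$$
using $g'(t)=nt^{n-1}$ and $g''(t)=n(n-1)t^{n-2}$.

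Next I would sum this identity over all $z\in F$ and handle the three resulting terms separately. The constant term contributes $|F|\mu^n$, which is exactly the first term on the right-hand side of the claimed inequality. The linear term vanishes: since $\mu$ is the average of $f$, we have $\sum_{z\in F}\bigl(f(z)-\mu\bigr)=\|f\|_1-|F|\mu=0$, so $n\mu^{n-1}\sum_{z\in F}(f(z)-\mu)=0$. For the quadratic (remainder) term, the containment $\xi_z\in[0,M]$ together with $n-2\geq 0$ gives $\xi_z^{\,n-2}\leq M^{n-2}$, whence
$$\frac{n(n-1)}{2}\sum_{z\in F}\xi_z^{\,n-2}\bigl(f(z)-\mu\bigr)^2\leq\frac{n(n-1)}{2}\,M^{n-2}\sum_{z\in F}\bigl(f(z)-\mu\bigr)^2.$$
Adding the three contributions yields precisely the asserted inequality.

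The only delicate point is justifying the pointwise bound $\xi_z^{\,n-2}\leq\|f\|_\infty^{n-2}$ on the remainder. This requires the Lagrange intermediate point $\xi_z$ to satisfy $0\leq\xi_z\leq\|f\|_\infty$, which in turn relies on the hypothesis $f\geq 0$ (so that $f(z)$ and $\mu$ both lie in $[0,M]$, and hence so does any point between them) and on $n\geq 2$ (so that the exponent $n-2$ is nonnegative and $t\mapsto t^{n-2}$ is nondecreasing on $[0,M]$). For $n=2$ the remainder exponent is $0$ and the estimate becomes an equality, so nonnegativity of $f$ plays no role there; it is in the general step that the assumption $f\geq 0$ does the essential work.
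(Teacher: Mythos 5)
Your proof is correct and complete. Note that the paper does not actually prove this lemma---it imports it verbatim from \cite{BHIPR13}---but your argument (second-order Taylor expansion of $t\mapsto t^n$ about the mean $\mu=\|f\|_1/|F|$, with the linear term annihilated by $\sum_{z\in F}\bigl(f(z)-\mu\bigr)=0$ and the Lagrange remainder bounded via $0\leq\xi_z\leq\|f\|_\infty$) is exactly the standard argument behind the cited result, as the paper's own label ``taylor'' suggests. You correctly isolate where $f\geq 0$ is needed, and it is worth adding that it also enters in the cancellation step, since $\sum_{z\in F}\bigl(f(z)-\mu\bigr)=0$ rests on the identity $\|f\|_1=\sum_{z\in F}f(z)$, which holds only because $f$ is nonnegative.
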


Lastly, we state the following lemma from \cite{CIP} and use Remark \ref{SO size} for the proof of Theorem \ref{triconfig}.

\begin{lemma}\label{sphere} Let $d\geq2$ and $j\in \mathbb{Z}_q^*$, where $q$ is odd. Set $||x||=x_1^2+...+ x_d^2.$ Denote by $S_j=\{x\in \mathbb{Z}_q^d: ||x||=j\}$ the sphere of radius $j$. Then,
$$|S_j|=q^{d-1}(1+o(1)).$$
\end{lemma}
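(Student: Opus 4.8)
The plan is to reduce the count over $\mathbb{Z}_{p^l}$ to the analogous count over the field $\mathbb{F}_p$ by a Hensel lifting argument, and then invoke the classical formula for the number of points of a sphere over a finite field. The whole argument hinges on the hypothesis that $j$ is a unit.

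First I would observe that, precisely because $j\in\mathbb{Z}_q^*$, every point of $S_j$ reduces $\bmod\, p$ to a \emph{nonsingular} point of the reduced sphere. Indeed, writing $f(x)=\|x\|-j$, the gradient at a point $\bar x=(\bar x_1,\dots,\bar x_d)\in\mathbb{F}_p^d$ with $\bar x_1^2+\cdots+\bar x_d^2=\bar j$ is $(2\bar x_1,\dots,2\bar x_d)$. If this vanished $\bmod\,p$, then all $\bar x_i\equiv 0$ (using that $p$ is odd), which would force $\bar j\equiv 0$, contradicting that $j$ is a unit. Hence the gradient is nonzero at every solution mod $p$. Next I would apply the multivariate Hensel lemma to count lifts: fixing a nonsingular solution $\bar x$ with, say, $\partial f/\partial x_i(\bar x)\not\equiv 0 \bmod p$, one assigns the remaining $d-1$ coordinates arbitrary lifts to $\mathbb{Z}_{p^l}$ (giving $(p^{l-1})^{d-1}=p^{(l-1)(d-1)}$ choices) and then solves for $x_i$ by the one-variable Hensel lemma, which yields a unique lift since the relevant derivative $2\bar x_i$ is a unit. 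Thus each nonsingular point of $S_j(\mathbb{F}_p)$ lifts to exactly $p^{(l-1)(d-1)}$ points of $S_j$, so that
\[
|S_j|=p^{(l-1)(d-1)}\,\bigl|S_j(\mathbb{F}_p)\bigr|.
\]

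Finally I would invoke the standard evaluation of the number of solutions of a diagonal quadratic equation over a finite field: for $j\neq 0$ one has $\bigl|S_j(\mathbb{F}_p)\bigr|=p^{d-1}+O\!\left(p^{(d-1)/2}\right)=p^{d-1}(1+o(1))$, the error term coming from a quadratic character (Gauss) sum and being of size $p^{(d-1)/2}$ for $d$ odd and $p^{(d-2)/2}$ for $d$ even. Substituting,
\[
|S_j|=p^{(l-1)(d-1)}\,p^{d-1}(1+o(1))=p^{l(d-1)}(1+o(1))=q^{d-1}(1+o(1)),
\]
with relative error $O\!\left(p^{-(d-1)/2}\right)$, which is $o(1)$ as $p\to\infty$; this is the sense in which the stated estimate is to be read.

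I expect the main obstacle to be purely one of formulation rather than difficulty: the delicate point is establishing that the reduced sphere is nonsingular \emph{everywhere}, and this is exactly what the unit hypothesis on $j$ guarantees, making the fiber count uniform and giving the clean factorization above. A purely Fourier-analytic alternative would instead expand $|S_j|=q^{d-1}+q^{-1}\sum_{t\neq 0}\chi(-tj)\,G(t)^d$, where $G(t)=\sum_{x\in\mathbb{Z}_q}\chi(tx^2)$ is the quadratic Gauss sum with $|G(t)|=p^{(l+v_p(t))/2}$; for $d\geq 3$ the triangle inequality already yields the result, but for $d=2$ it produces an error of the same order as the main term, so one would be forced to track the exact phases of the $G(t)$ and exploit a Salié-type cancellation. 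This is precisely the complication that the Hensel lifting route avoids, which is why I would prefer it.
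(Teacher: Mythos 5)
Your proof is correct, but note first that the paper itself contains no proof of this lemma: it is imported from \cite{CIP}, and the proof there is essentially the Fourier-analytic alternative you sketch at the end --- one writes $|S_j|=q^{d-1}+q^{-1}\sum_{t\neq 0}\chi(-tj)G(t)^d$ and evaluates the quadratic Gauss sums $G(t)$ over $\mathbb{Z}_{p^l}$ explicitly according to the valuation of $t$, exploiting cancellation in the sum over the unit part of $t$. Your Hensel-lifting argument is a genuinely different and more elementary route: the key observation that the unit hypothesis on $j$ forces every mod-$p$ solution to be nonsingular is exactly right, the fiber count $p^{(l-1)(d-1)}$ per nonsingular point is correct (lift the $d-1$ free coordinates arbitrarily, then apply the one-variable Hensel lemma --- the same lemma the paper itself invokes in the proof of Lemma \ref{zero norm} --- in the remaining coordinate, where the derivative $2\bar{x}_i$ is a unit), and the exact factorization $|S_j|=p^{(l-1)(d-1)}\,|S_{\bar{j}}(\mathbb{F}_p)|$ combined with the classical diagonal-quadric count over $\mathbb{F}_p$ finishes the job. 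What each approach buys: yours avoids all Gauss-sum phase bookkeeping, in particular the delicate $d=2$ case (where, as you correctly note, the triangle inequality alone loses a factor of $l$ against the main term); though the rescue there is not quite ``Sali\'e-type'' --- since $\eta(u)^d=1$ for $d$ even, the phases of $G(t)^d$ are constant in the unit variable and the unit sum collapses to a Ramanujan sum, while for $d$ odd it is an ordinary Gauss sum, so no Kloosterman-type structure actually arises. The Gauss-sum route of \cite{CIP}, on the other hand, simultaneously yields the Fourier decay of the sphere and handles non-unit radii, which is needed elsewhere in that paper. Finally, your factorization makes transparent a point the statement glosses over: the relative error is $O\bigl(p^{-(d-1)/2}\bigr)$ (for $d=2$ it is exactly $\pm 1/p$, since $|S_{\bar{j}}(\mathbb{F}_p)|=p-\eta(-1)$), so the $o(1)$ is meaningful only as $p\to\infty$ and for fixed $p$ one gets a constant-factor estimate; your closing remark reads the statement in precisely this sense, which is all the present paper uses, since the lemma enters only through Remark \ref{SO size} to give $|SO_2(\mathbb{Z}_q)|=|S_1|\sim q$.
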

\begin{remark}\label{SO size} Note that 
\begin{equation}\label{matrixset2}
SO_2(\mathbb{Z}_q)= \left\{ \begin{bmatrix}
a&-b\\
b&a
\end{bmatrix}
: a^2+b^2=1\right\}
\end{equation}
and hence if we denote by $S_1$ the sphere of radius $1$ in $\mathbb{Z}_q^2$, then $|SO_2(\mathbb{Z}_q)|=|S_1| \sim q$, by Lemma \ref{sphere}.
\end{remark}

\begin{proof}[Proof of Theorem \ref{triconfig}]
 We first recall that $SO_2({\mathbb{Z}_q})=\{A\in M_2(\mathbb{Z}_q): AA^{T}=I, \; det(A)=1\}$
and define an equivalence relation on  $(\mathbb{Z}_q^2)^3$ as
$$(a,b,c)\sim (a',b',c')$$
 if $\exists \theta\in SO_2(\mathbb{Z}_q)$ with $a'=\theta a,\; b'=\theta b,\; c'=\theta c.$
 
For $E\subset \mathbb{Z}_q^2$ and $a,b,c \in \mathbb{Z}_q^2$, let 
\begin{equation*}
\mu (a,b,c)=|\{(x,y,z)\in E^3: \exists \theta\in SO_2(\mathbb{Z}_q)\; \text{such that} \; x-y=\theta a, y-z=\theta b, x-z=\theta c.\}|.
\end{equation*}

Note that $\mu(\theta a, \theta b, \theta c)=\mu(a,b,c)$ for all $\theta\in  SO_2({\mathbb{Z}_q})$, so $\mu$ can be viewed as a function $\mu: (\mathbb{Z}_q^2)^3/\sim \to \mathbb{Z}_{\geq 0}$.

Then by the Cauchy-Schwarz inequality, 
$$|E|^6=\left(\sum_{(a,b,c)\in(\mathbb{Z}_q^2)^3/\sim}\mu(a,b,c)\right)^2\leq |T_2(E)|\left (\sum_ {(a,b,c)\in(\mathbb{Z}_q^2)^3/\sim}\mu^2(a,b,c) \right),$$

where
\begin{eqnarray*}
|T_2(E)|=|\{(a,b,c)\in(\mathbb{Z}_q^2)^3/\sim: \mu(a,b,c)\neq 0\}|,
\end{eqnarray*}
which is equal to
$$|\{(a,b,c)\in(\mathbb{Z}_q^2)^3/\sim:\exists (x,y,z)\in E^3\: \text{and}\; \theta\in  SO_2({\mathbb{Z}_q})\; \text{such that}\;  x-y=\theta a, y-z=\theta b, x-z=\theta c\}|.$$

We have, 
\begin{eqnarray*}
\mu^{2}(a,b,c)&=&|\{(x,y,z,x',y',z')\in E^6 : \exists \theta_1, \theta_2\in SO_2(\mathbb{Z}_q)\; \text{such that}\\
&&x-y=\theta_1 a,\; x'-y'=\theta_2 a\\
&&y-z=\theta_1 b,\; y'-z'=\theta_2 b\\
&&x-z=\theta_1 c,\: x'-z'=\theta_2 c\}|\\
&=&|\{(x,y,z,x',y',z')\in E^6 : \exists \theta_1, \theta_2\in SO_2(\mathbb{Z}_q)\; \text{such that}\\
&&\theta_1^{-1}(x-y)=\theta_2^{-1}(x'-y')=a\\
&&\theta_1^{-1}(y-z)=\theta_2^{-1}(y'-z')=b\\
&&\theta_1^{-1}(x-z)=\theta_2^{-1}(x'-z')=c\}|
\end{eqnarray*}
so that
\begin{eqnarray*}
\sum_ {(a,b,c)\in(\mathbb{Z}_q^2)^3/\sim}\mu^2(a,b,c)&=&|\{(x,y,z,x',y',z')\in E^6 : \exists \theta_1, \theta_2\in SO_2(\mathbb{Z}_q)\; \text{such that}\\
&&\theta_1^{-1}(x-y)=\theta_2^{-1}(x'-y')\\
&&\theta_1^{-1}(y-z)=\theta_2^{-1}(y'-z')\\
&&\theta_1^{-1}(x-z)=\theta_2^{-1}(x'-z')\}|\\
&=& |\{(x,y,z,x',y',z')\in E^6 : \exists \theta \in SO_2(\mathbb{Z}_q)\; \text{such that}\\
&&\theta(x-y)=x'-y',\;\theta(y-z)=y'-z',\; \theta(x-z)=x'-z'\}|\\
&=& |\{(x,y,z,x',y',z')\in E^6 : \exists \theta \in SO_2(\mathbb{Z}_q)\; \text{such that}\\
&&x'-\theta x=y'-\theta y=z'-\theta z\}|.
\end{eqnarray*}
For a fixed $\theta  \in SO_2(\mathbb{Z}_q)$, let 
\begin{equation}\label{nu}
\nu_{\theta}(t)=|\{(u,v)\in E\times E:\; u-\theta(v)=t\}|.
\end{equation}
 Then we have 
$$ \nu_{\theta}^3(t) = |\{(x,y,z,x',y',z')\in E^6:x'-\theta x=y'-\theta y=z'-\theta z=t\}|, $$
and therefore
\begin{equation}\label{L2}
\sum_{(a,b,c)\in(\mathbb{Z}_q^2)^3/\sim}\mu^2(a,b,c)\leq \sum_{\substack{\theta\in  SO_2(\mathbb{Z}_q)\\t\in \mathbb{Z}_q^2}}\nu_{\theta}^3(t). 
\end{equation}
By Lemma \ref{taylor}, 
$$ \sum_{t\in \mathbb{Z}_q^{2}}\nu_{\theta}^3(t)\leq q^2\left(\frac{\|\nu_{\theta}\|_{1}}{q^2}\right)^{3}+3\|\nu_{\theta}\|_{\infty} \sum_{t\in \mathbb{Z}_q^{2}}\left(\nu_{\theta}(t)-\frac{\|\nu_{\theta}\|_{1}}{q^2} \right)^{2}$$
where
\noindent $\|\nu_{\theta}\|_{1}=\sum_{t\in \mathbb{Z}_q^{2}}\nu_{\theta}(t)=|E|^2$ and
\noindent $ \|\nu_{\theta}\|_{\infty}=sup_{t}|\nu_{\theta}(t)|\leq|E|$ as when we first fix $v$ in (\ref{nu}),  $u$ is uniquely determined.

It follows that
\begin{eqnarray*}
\sum_{t\in \mathbb{Z}_q^{2}}\nu_{\theta}^3(t)&\leq& q^{-4}|E|^6+3|E|\sum_{t\in \mathbb{Z}_q^{2}}\left(\nu_{\theta}(t)-\frac{\|\nu_{\theta}\|_{1}}{q^2} \right)^{2}\\
&\leq&q^{-4}|E|^6+3q^2|E|\sum_{ \xi\in \mathbb{Z}_q^{2}\setminus (0,0)}|\widehat{\nu}_{\theta}(\xi)|^2\;(\textit{by Plancherel Theorem})
\end{eqnarray*}
and thus
\begin{equation*}\sum_{\substack{\theta\in  SO_2(\mathbb{Z}_q)\\t\in \mathbb{Z}_q^2}}\nu_{\theta}^3(t)\leq|SO_2(\mathbb{Z}_q)|q^{-4}|E|^6+3q^2|E|\sum_{\substack{\theta\in  SO_2(\mathbb{Z}_q)\\ \xi\in \mathbb{Z}_q^{2}\setminus (0,0)}}|\widehat{\nu}_{\theta}(\xi)|^2
\end{equation*}
By Remark \ref{SO size}, $|SO_2(\mathbb{Z}_q)|\sim q$ and hence
\begin{equation}\label{cube}
\sum_{\substack{\theta\in  SO_2(\mathbb{Z}_q)\\t\in \mathbb{Z}_q^2}}\nu_{\theta}^3(t)\lesssim q^{-3}|E|^6+3q^2|E|\sum_{\substack{\theta\in  SO_2(\mathbb{Z}_q)\\ \xi\in \mathbb{Z}_q^{2}\setminus (0,0)}}|\widehat{\nu}_{\theta}(\xi)|^2
\end{equation}
Noting that
\begin{eqnarray*}
\nu_{\theta}(t)&=&\sum_{v\in \mathbb{Z}_q^2}E(v)E(t+\theta v)\\
&=&\sum_{v,\alpha \in \mathbb{Z}_q^2 }E(v)\chi (\alpha.(t+\theta v))\widehat{E}(\alpha)\\
&=&\sum_{v,\alpha \in \mathbb{Z}_q^2 }\widehat{E}(\alpha)\chi(t.\alpha)E(v)\chi(\alpha.\theta v)\\
&=&\sum_{\alpha \in \mathbb{Z}_q^2}\widehat{E}(\alpha)\chi(t.\alpha)\sum_{v\in  \mathbb{Z}_q^2 }\chi (\alpha.\theta v)E(v)\\
&=&\sum_{\alpha \in \mathbb{Z}_q^2}\widehat{E}(\alpha)\chi(t.\alpha)\sum_{v\in  \mathbb{Z}_q^2 }\chi(\theta^{T}(\alpha).v)E(v)\\
&=&q^2\sum_{\alpha  \in \mathbb{Z}_q^2}\widehat{E}(\alpha)\chi(t.\alpha)\widehat{E}(-\theta^{T}(\alpha)),
\end{eqnarray*}
and 
\begin{eqnarray*}
\widehat{\nu}_{\theta}(\xi)&=&q^{-2}\sum_{t\in \mathbb{Z}_q^2}\chi(-t.\xi)\nu_{\theta}(t)\\
&=&q^{-2}\sum_{t\in \mathbb{Z}_q^2}\chi(-t.\xi)q^2\sum_{\alpha  \in \mathbb{Z}_q^2}\widehat{E}(\alpha)\chi(t.\alpha)\widehat{E}(-\theta^{T}(\alpha))\\
&=&\sum_{\alpha \in \mathbb{Z}_q^2}\widehat{E}(\alpha)\widehat{E}(-\theta^{T}(\alpha))\sum_{t\in \mathbb{Z}_q^2} \chi(t.(\alpha-\xi))\\
&=&q^2\widehat{E}(\xi)\widehat{E}(-\theta^{T}(\xi)),
\end{eqnarray*}
we have
\begin{eqnarray*}
\sum_{\substack{\theta\in  SO_2(\mathbb{Z}_q)\\ \xi\in \mathbb{Z}_q^{2}\setminus (0,0)}}|\widehat{\nu}_{\theta}(\xi)|^2&=&q^4\sum_{\substack{\theta\in  SO_2(\mathbb{Z}_q)\\ \xi\in \mathbb{Z}_q^{2}\setminus (0,0)}}|\widehat{E}(\xi)|^2|\widehat{E}(-\theta^{T}(\xi))|^2\\
&=&q^4\sum_{\substack{\theta\in  SO_2(\mathbb{Z}_q)\\ \xi\in \mathbb{Z}_q^{2}\setminus (0,0)}}|\widehat{E}(\xi)|^2|\widehat{E}(\theta^{T}(\xi))|^2\\
&\leq&q^4\left(\max_{\xi \in \mathbb{Z}_q^{2}\setminus(0,0)}|Stab(\xi)|\right)\sum_{\xi\neq(0,0)}|\widehat{E}(\xi)|^2 \sum_{\substack{\eta \ne (0,0)\\ \| \eta \|=\| \xi \|} }   |\widehat{E}(\eta)|^2
\end{eqnarray*}

Plugging this value in (\ref{cube}) and using (\ref{L2}) we get 
\begin{eqnarray}\label{stabl2}
\sum_{(a,b,c)\in(\mathbb{Z}_q^2)^3/\sim}\mu^2(a,b,c)&\leq& \sum_{\substack{\theta\in  SO_2(\mathbb{Z}_q)\\t\in \mathbb{Z}_q^2}}\nu_{\theta}^3(t)\nonumber\\
&\lesssim&q^{-3}|E|^6+ 3q^6|E|\left(\max_{\xi \in \mathbb{Z}_q^{2}\setminus(0,0)}|Stab(\xi)|\right)\sum_{\xi\neq(0,0)}|\widehat{E}(\xi)|^2 \sum_{\substack{\eta \ne (0,0)\\ \| \eta \|=\| \xi \|} }   |\widehat{E}(\eta)|^2\nonumber\\
&=&q^{-3}|E|^6+ 3q^6|E|\mathrm{I}
\end{eqnarray}
where 
\begin{equation*}
\mathrm{I}=\left(\max_{\xi \in \mathbb{Z}_q^{2}\setminus(0,0)}|Stab(\xi)|\right)\sum_{\xi\neq(0,0)}|\widehat{E}(\xi)|^2 \sum_{\substack{\eta \ne (0,0)\\ \| \eta \|=\| \xi \|} }   |\widehat{E}(\eta)|^2
\end{equation*}
We first note that $|Stab(\xi)|\leq p^{l-1}$ for $ \xi  \ne (0,0)$  by Lemma \ref{nonzero norm} and \ref{zero norm}. Extending the summation in $\eta$ over all $\eta$ and using Plancherel twice in $\mathrm{I}$, we get
\begin{equation*}
 \mathrm{I}\leq p^{l-1}q^{-4}|E|^2.
\end{equation*}
Plugging this value in (\ref{stabl2}) gives 
\begin{equation}
\sum_{(a,b,c)\in(\mathbb{Z}_q^2)^3/\sim}\mu^2(a,b,c)\lesssim q^{-3}|E|^6+3q^2|E|^3p^{l-1},
\end{equation}
so that
\begin{eqnarray*}
|T_2(E)|&\geq& \frac{|E|^6}{q^{-3}|E|^6+3q^2|E|^3p^{l-1}}\\
&\geq&\frac{|E|^6}{2q^{-3}|E|^6}=\frac{q^3}{2}
\end{eqnarray*}
whenever $|E|\geq\sqrt[3]{3}p^{2l-\frac{1}{3}}$, which completes the proof.
\end{proof}

\vskip.125in

\section{Proof of Theorem \ref{Areainc}}
Now before giving the proof, let us introduce the necessary background.

Let $q=p^l$ and $(a,b)\in \mathbb{Z}_q^2$. Let $\left \langle(a,b)\right \rangle=\{t(a,b)\colon t\in \mathbb{Z}_q\}$ be the submodule of $\mathbb{Z}_q^2$ generated by $(a,b)$, which gives the line through origin and the point $(a,b)$ in $\mathbb{Z}_q^2$. Now, consider the set $$\Lambda_n=\{(a,b)\in \mathbb{Z}_q^2: p^n| a,b\; \text{but}\; (a,b)\neq(0,0)\; \text{mod}\;p^{n+1}\},$$
and denote $|\Lambda_n|=\lambda_n$ for $n=0,1, \cdots,l-1$.

\begin{lemma}
$\lambda_n=p^{2(l-n)}-p^{2(l-n-1)}$.
\end{lemma}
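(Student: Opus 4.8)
The plan is to establish the formula by a direct counting argument, exploiting the fact that $\Lambda_n$ is described purely in terms of $p$-divisibility of the two coordinates. By definition a pair $(a,b)$ lies in $\Lambda_n$ exactly when both $a$ and $b$ are divisible by $p^n$, while it is not the case that both are divisible by $p^{n+1}$; equivalently, the common $p$-divisibility of the coordinates is exactly $n$. So I would count $\Lambda_n$ as the difference between the set of pairs divisible by $p^n$ and the set of pairs divisible by $p^{n+1}$.

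First I would count the pairs $(a,b)\in\mathbb{Z}_q^2$ with $p^n\mid a$ and $p^n\mid b$. The multiples of $p^n$ in $\mathbb{Z}_{p^l}$ form the subgroup $p^n\mathbb{Z}_{p^l}$, and the map $c\mapsto p^n c$ carries $\mathbb{Z}_{p^{l-n}}$ bijectively onto this set, so it has exactly $p^{l-n}$ elements. Consequently there are $(p^{l-n})^2=p^{2(l-n)}$ pairs with both coordinates divisible by $p^n$.

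Next I would observe that the excluded condition $(a,b)\equiv(0,0)\bmod p^{n+1}$ is precisely $p^{n+1}\mid a$ and $p^{n+1}\mid b$, and the identical count (with $n$ replaced by $n+1$) gives $(p^{l-n-1})^2=p^{2(l-n-1)}$ such pairs. Since every pair divisible by $p^{n+1}$ is in particular divisible by $p^n$, the excluded set is contained in the first set, and subtracting yields
$$\lambda_n=p^{2(l-n)}-p^{2(l-n-1)},$$
as claimed. (As a sanity check, for $n=l-1$ this gives $p^2-1$, matching the fact that the pairs with both coordinates in $p^{l-1}\mathbb{Z}_{p^l}$ number $p^2$ and only $(0,0)$ is removed.)

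I do not expect any genuine obstacle here; the argument is elementary inclusion of one divisibility condition in the other. The single fact that needs care is the count of elements of $\mathbb{Z}_{p^l}$ divisible by a fixed power $p^k$, namely $p^{l-k}$, which follows immediately from the cyclic structure of $\mathbb{Z}_{p^l}$.
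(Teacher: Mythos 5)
Your proof is correct and follows the same route as the paper: count the $p^{2(l-n)}$ pairs with both coordinates divisible by $p^n$, then subtract the $p^{2(l-n-1)}$ pairs where both are divisible by $p^{n+1}$. Your justification of the count $p^{l-k}$ of multiples of $p^k$ in $\mathbb{Z}_{p^l}$ is a welcome extra detail the paper leaves implicit.
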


\begin{proof}
Since $p^n| a,b$ in $\mathbb{Z}_q$ we have $p^{l-n}$ choices for $a$ and $b$ each and hence $p^{2(l-n)}$ choices for $(a,b)$. Now we need to subtract $p^{2(l-n-1)}$ cases where $p^{n+1}$ divides both $a$ and $b$ to get the desired result. 
\end{proof}

\begin{lemma}  Let $\mathcal{L}_n=\{ \left \langle(a,b)\right \rangle :  (a,b)\in \Lambda_n \}$ denote the set of lines generated by the points of $\Lambda_n$. Then $|\mathcal{L}_n|=p^{l-n}+p^{l-n-1}.$
\end{lemma}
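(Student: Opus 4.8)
The plan is to realize the lines in $\mathcal{L}_n$ as the orbits of the unit group $\mathbb{Z}_q^*$ acting on $\Lambda_n$ by scalar multiplication, and then to count these orbits via the orbit-stabilizer theorem together with the already-established value $\lambda_n=p^{2(l-n)}-p^{2(l-n-1)}$.

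First I would note that every $(a,b)\in\Lambda_n$ can be written as $(a,b)=(p^n a', p^n b')$ with at least one of $a',b'$ a unit, so that the minimal valuation of the coordinates of $(a,b)$ is exactly $n$. Multiplying by a unit $t\in\mathbb{Z}_q^*$ preserves this valuation, so $\mathbb{Z}_q^*$ does act on $\Lambda_n$. The structural fact driving the count is that two points of $\Lambda_n$ generate the same line if and only if they lie in the same orbit: if $\langle(a,b)\rangle=\langle(c,d)\rangle$ with both points in $\Lambda_n$, then $(c,d)=t(a,b)$ for some $t\in\mathbb{Z}_q$, and since both sides have valuation exactly $n$, the scalar $t$ must be a unit. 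Hence $\mathcal{L}_n$ is in bijection with the orbit space $\Lambda_n/\mathbb{Z}_q^*$.

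Next I would compute the stabilizer of a point $(a,b)=(p^n a', p^n b')\in\Lambda_n$. The condition $t(a,b)=(a,b)$ amounts to $(t-1)p^n a'\equiv 0$ and $(t-1)p^n b'\equiv 0 \bmod p^l$; using that one of $a',b'$ is a unit, this is equivalent to $t\equiv 1 \bmod p^{l-n}$. The units of this form are exactly $t=1+p^{l-n}s$ with $0\le s<p^n$, so every stabilizer has size $p^n$, independent of the chosen point. Orbit-stabilizer then yields a uniform orbit size $|\mathbb{Z}_q^*|/p^n=(p^l-p^{l-1})/p^n=p^{l-n}-p^{l-n-1}$.

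Finally, dividing the size of $\Lambda_n$ by this common orbit size gives
$$|\mathcal{L}_n|=\frac{\lambda_n}{p^{l-n}-p^{l-n-1}}=\frac{p^{2(l-n)}-p^{2(l-n-1)}}{p^{l-n}-p^{l-n-1}}=p^{l-n}+p^{l-n-1},$$
as claimed. The only genuinely delicate point is the equivalence \emph{same line} $\iff$ \emph{same orbit}, which relies crucially on all points of $\Lambda_n$ sharing the exact valuation $n$; once that is secured, the uniformity of the stabilizers and the final arithmetic are routine. An essentially equivalent route is to observe that $\mathcal{L}_n$ is naturally identified with the projective line $\mathbb{P}^1(\mathbb{Z}/p^{l-n})$, whose cardinality is the standard value $p^{l-n}+p^{l-n-1}$.
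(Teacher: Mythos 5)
Your proof is correct and is essentially the paper's argument: both divide $\lambda_n=p^{2(l-n)}-p^{2(l-n-1)}$ by the number $p^{l-n}-p^{l-n-1}$ of points of $\Lambda_n$ lying on each line, the paper obtaining this count as $\phi(p^{l-n})$, the number of generators of the cyclic line $\langle(a,b)\rangle$ of order $p^{l-n}$, while you obtain the same number as the orbit size under the $\mathbb{Z}_q^*$-action via orbit-stabilizer (the orbit of $(a,b)$ is exactly the set of generators, so the two counts coincide). Your explicit valuation argument that two points of $\Lambda_n$ span the same line if and only if they differ by a unit scalar is a detail the paper leaves implicit, but it does not change the route.
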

\begin{proof}
For $(a,b)\in \Lambda_n$, note that $\left \langle(a,b)\right \rangle$ is cyclic and $|\left \langle(a,b)\right \rangle|=p^{l-n}$. Hence there exist $\phi(p^{l-n})=p^{l-n}-p^{l-n-1}$ generators of the group which lies in $\Lambda_n$. So that for each n, we have 
$$\frac{p^{2(l-n)}-p^{2(l-n-1)}}{p^{l-n}-p^{l-n-1}}=p^{l-n}+p^{l-n-1}$$ many lines in  $\mathcal{L}_n$ each containing $p^{l-n}$ points.
\end{proof}

We now conclude that the average number of points in a line in $\mathbb{Z}_q^2$ is
\begin{eqnarray*}
&=&\frac{\sum_{n=0}^{l-1}(p^{l-n}+p^{l-n-1})p^{l-n}}{\sum_{n=0}^{l-1}p^{l-n}+p^{l-n-1}}\\
&=&\frac{p^{2l}+p^{2l-1}+p^{2l-2}+\cdots+p^2+p}{p^{l}+2p^{l-1}+2p^{l-2}+\cdots+2p+1}\\
&\sim& p^l
\end{eqnarray*}

In what follows we will only consider $\mathcal{L}_0$, i.e. the set of all lines of full length $q$ in $\mathbb{Z}_q^2$. 

\begin{lemma}\label{lines} For any  $(a,b)\in \mathbb{Z}_q^2$ if $(a,b)\in \Lambda_n$, then $(a,b)$ appears in $p^n$ distinct lines in $\mathcal{L}_0$.
\end{lemma}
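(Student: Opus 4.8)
The plan is to reduce the count to a single, maximally simple point by exploiting the invariance of the family $\mathcal{L}_0$ under $GL_2(\mathbb{Z}_q)$, and then to count directly. First I would write $(a,b)=p^n(a',b')$ where $(a',b')\in\Lambda_0$; this is possible precisely because $(a,b)\in\Lambda_n$ forces $p^n\mid a,b$ while not both of $a,b$ are divisible by $p^{n+1}$, so $(a',b')$ has at least one unit coordinate. Such a vector is unimodular, hence completes to a basis: if $a'$ is a unit, the matrix $g=\begin{bmatrix} a' & 0\\ b' & 1\end{bmatrix}$ lies in $GL_2(\mathbb{Z}_q)$ and sends $(1,0)$ to $(a',b')$ (and symmetrically when $b'$ is the unit). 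Thus $g^{-1}(a,b)=(p^n,0)$.

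Next I would observe that $GL_2(\mathbb{Z}_q)$ acts on the set of full lines $\mathcal{L}_0$ bijectively and preserves incidences: since $\det g$ is a unit, $g$ is invertible modulo $p$, so it maps $\Lambda_0$ to $\Lambda_0$ and carries each line $\langle(c,d)\rangle$ to $\langle g(c,d)\rangle$. Consequently the number of lines of $\mathcal{L}_0$ through $(a,b)$ equals the number through $(p^n,0)$, and it remains only to count the latter.

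For the final count I would use the explicit parametrization of $\mathcal{L}_0$ into the two disjoint families $\langle(1,s)\rangle$ with $s\in\mathbb{Z}_q$ and $\langle(u,1)\rangle$ with $p\mid u$; these exhaust all full lines because a generator in $\Lambda_0$ has a unit coordinate, and their total count $q+p^{l-1}=p^l+p^{l-1}$ matches $|\mathcal{L}_0|$. A line $\langle(1,s)\rangle$ contains $(p^n,0)$ iff $t=p^n$ and $p^n s\equiv 0$, i.e. $s\equiv 0 \bmod p^{l-n}$, giving exactly $p^n$ values of $s$; a line $\langle(u,1)\rangle$ would force $t=0$ and hence $tu=0\neq p^n$, contributing none. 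This yields $p^n$ lines total.

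I expect the main obstacle to be the bookkeeping with zero divisors rather than any deep idea: one must verify both that the two families genuinely exhaust $\mathcal{L}_0$ without overlap and that the reduction step is legitimate, i.e. that $GL_2(\mathbb{Z}_q)$ preserves the fullness of a line, which hinges on invertibility modulo $p$. An alternative that avoids the group action is to count directly the solutions of the linear congruences $as\equiv b$ and $bu\equiv a$ (with $p\mid u$) through a case split on whether $v_p(a)=v_p(b)$ or not; in every case the solution count collapses to $p^n$, but this route is more computational and the valuation case analysis is exactly where care is needed.
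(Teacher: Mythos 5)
Your proof is correct, but it takes a genuinely different route from the paper's. The paper argues by direct construction: assuming without loss of generality that $p^{n+1}\nmid a$, it exhibits the $p^n$ generators $\left(\frac{a}{p^n},\, ip^{l-n}+\frac{b}{p^n}\right)$ for $0\le i<p^n$, checks these are $p^n$ distinct points, and uses that $\frac{a}{p^n}$ is a unit to conclude they generate $p^n$ distinct lines of $\mathcal{L}_0$ through $(a,b)$. You instead normalize the point rather than the generators: since $(a,b)=p^n(a',b')$ with $(a',b')$ unimodular, a basis completion gives $g\in GL_2(\mathbb{Z}_q)$ carrying $(a,b)$ to $(p^n,0)$, and the action is legitimate for exactly the reason you identify (invertibility mod $p$ preserves $\Lambda_0$, hence preserves $\mathcal{L}_0$ and incidences); then your complete, irredundant parametrization $\mathcal{L}_0=\{\langle(1,s)\rangle: s\in\mathbb{Z}_q\}\cup\{\langle(u,1)\rangle: p\mid u\}$, whose size $p^l+p^{l-1}$ you rightly check against the paper's count of $|\mathcal{L}_0|$, reduces everything to the observation that $p^n s\equiv 0\bmod p^l$ has exactly $p^n$ solutions. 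What your route buys is exactness for free: because the enumeration of $\mathcal{L}_0$ is exhaustive and without repetition, you obtain simultaneously that $(a,b)$ lies on at least and at most $p^n$ full lines, whereas the paper's proof, as literally written, only displays $p^n$ distinct lines through the point and leaves implicit the (easy) verification that no other line of $\mathcal{L}_0$ passes through it --- an upper bound that is actually the direction needed when the lemma is invoked in the proof of Theorem \ref{Areainc}. The cost is the extra (entirely routine) machinery of basis completion and the group action; the paper's construction is shorter and self-contained but less obviously two-sided.
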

\begin{proof} Say $(a,b)\in \Lambda_n$ and $p^{n+1}\nmid a$. Then $(a,b)$ belongs to the lines generated by $\left(\frac{a}{p^n}, ip^{l-n}+\frac{b}{p^n}\right)$ for $i=0,...,p^n-1$. Note that $i_0 p^{l-n}+\frac{b}{p^n}=i_1p^{l-n}+\frac{b}{p^n} \;\text{mod}\; p^l$ would imply 
\begin{eqnarray*}
i_0 p^{l-n}&=&i_1 p^{l-n} \;\text{mod}\; p^l\\
i_0&=&i_1\;\text{mod}\; p^n
\end{eqnarray*}
which is not the case. Hence the given points are all distinct. Since $\frac{a}{p^n}$ is a unit in $\mathbb{Z}_q$ it follows that the lines determined by the given generators are all distinct.
\end{proof}

\begin{lemma}\label{S}
Let $R_i=\{(x,y)\in \mathbb{Z}_q^2\times \mathbb{Z}_q^2: x-y\in\Lambda _i\}$  and $r_i=|R_i|$, for $i=1,...,l-1$. Then $r:=r_1p+r_2p^2+...+r_{l-1}p^{l-1}\leq 2p^{4l-1}$.
\end{lemma}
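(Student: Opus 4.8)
The plan is to compute $r_i$ exactly and then evaluate the weighted sum in closed form, since the bound will fall out of an elementary telescoping cancellation. First I would reduce $r_i$ to a count of differences. For a fixed vector $z\in\Lambda_i$, the number of pairs $(x,y)\in\mathbb{Z}_q^2\times\mathbb{Z}_q^2$ with $x-y=z$ is exactly $q^2=p^{2l}$, because $x$ ranges freely over $\mathbb{Z}_q^2$ and then $y=x-z$ is forced. Summing over the admissible differences $z\in\Lambda_i$ gives
\[
r_i=\lambda_i\,q^2=\bigl(p^{2(l-i)}-p^{2(l-i-1)}\bigr)p^{2l},
\]
where I have used the earlier lemma computing $\lambda_i=|\Lambda_i|$. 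This already reduces the whole problem to a geometric-series manipulation.

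Next I would substitute this into the weighted sum. Writing each term as
\[
r_i\,p^i=p^{2l}\bigl(p^{2l-i}-p^{2l-i-2}\bigr),
\]
I would sum over $i=1,\dots,l-1$ and shift the index in the second piece (replacing $i$ by $i+2$). The two resulting geometric sums overlap in all exponents from $l+1$ up to $2l-3$, and those terms cancel, leaving only the two highest terms of the first sum and the two lowest terms of the second. This yields the exact closed form
\[
r=\sum_{i=1}^{l-1}r_i\,p^i=p^{2l}\bigl(p^{2l-1}+p^{2l-2}-p^{l}-p^{l-1}\bigr)=p^{4l-1}+p^{4l-2}-p^{3l}-p^{3l-1}.
\]
(One can sanity-check this against small cases such as $l=2$, where it gives $p^7-p^5$, matching a direct evaluation; the boundary case $l=1$ gives the empty sum $0$, which the formula also returns.)

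Finally, the bound is immediate: discarding the two negative terms and using $p^{4l-2}\le p^{4l-1}$ (valid since $p\ge 3$) gives
\[
r=p^{4l-1}+p^{4l-2}-p^{3l}-p^{3l-1}\le p^{4l-1}+p^{4l-2}\le 2p^{4l-1},
\]
as claimed. I do not anticipate a genuine obstacle here, since the argument is purely arithmetic; the only point requiring care is the index shift that produces the telescoping cancellation, and the observation that the surviving negative terms $-p^{3l}-p^{3l-1}$ can simply be dropped to obtain a clean upper bound.
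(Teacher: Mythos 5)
Your proof is correct and follows essentially the same route as the paper: compute $r_i$ exactly, sum the geometric series to the closed form $q^2\bigl(p^{2l-1}+p^{2l-2}-p^{l}-p^{l-1}\bigr)$, and drop the negative terms to get $2p^{4l-1}$. Your derivation of $r_i=\lambda_i q^2$ via translation invariance is a slightly cleaner sub-step than the paper's coordinate-by-coordinate count, but the argument is otherwise identical.
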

\begin{proof} Let $x=(x_1,x_2), y=(y_1,y_2)$ in $Z_q^2$. Now if $x-y=(x_1-y_1,x_2-y_2)\in \Lambda _i$ then $p^{i}|x_1-y_1$ and $x_2-y_2$ but $p^{i+1}\nmid x_1-y_1$ or $x_2-y_2$. $p^{i}|x_1-y_1$ gives $p^{l-i}$ choices for $x_1-y_1$ in $\mathbb{Z}_q$, we have $q$ choices for $y_1$ and $y_1$ determines $x_1$ uniquely. Hence  we have $qp^{l-i}$ choices for $x_1$ and $y_1$. Same argument applies for $x_2$ and $y_2$. Altogether the condition $p^{i}|x_1-y_1$ and $x_2-y_2$ gives $qp^{l-i}qp^{l-i}$ choices for $x=(x_1,x_2)$, $y=(y_1,y_2)$.

To exclude the cases where $p^{i+1}$ divides both $x_1-y_1$ and $x_2-y_2$ we need to subtract $qp^{l-(i+1)}qp^{l-(i+1)}$ cases of $x$ and $y$. Hence,

  $$r_i=qp^{l-i}qp^{l-i}-qp^{l-i-1}qp^{l-i-1}.$$ 

\noindent Now summing $r_ip^{i}$'s over all $i=1,...,l-1$ we get
\begin{eqnarray*}
r&=&(qp^{l-1}qp^{l-1}-qp^{l-2}qp^{l-2})p\\
&+&(qp^{l-2}qp^{l-2}-qp^{l-3}qp^{l-3} )p^2\\
&\vdots&\\
&+&(qpqp-q^2)p^{l-1}\\
&=&q^2(p^{2l-1}+p^{2l-2}-p^l-p^{l-1})\\
&\leq&2q^2p^{2l-1}=2p^{4l-1}
\end{eqnarray*}
\end{proof}


\begin{proof} [Proof of Theorem \ref{Areainc}]

Let $L$ be a line in $\mathcal{L}_0$ and consider the sum set 
$$E+L=\{e+l:e\in E,\: l\in L\}$$
Since $|L||E|>q^2$ 
$$e_1+l_1=e_2+l_2 \;\text{for some}\; l_1\neq l_2$$
so that 
\begin{eqnarray}\label{direction}
l_2-l_1=e_1-e_2.
\end{eqnarray}

Here we aim to average the solutions of the equation (\ref{direction}) over $p^l+p^{l-1}$ lines in $\mathcal{L}_0$. To start with, we count the number of solutions of (\ref{direction}) over all lines in $\mathcal{L}_0$ in two cases:

In the case $e_1=e_2$, there are $|E|$ and $q^2$ choices for  $e_1=e_2$ and $l_1=l_2$ , respectively. 

In the case $e_1\neq e_2$, we can choose $e_1$ and $e_2$ in $|E|(|E|-1)$ different ways, and once we fix them, we look at the difference $e_1-e_2$. At that point let $S_i=\{(e_1,e_2)\in E \times E: e_1-e_2\in \Lambda_i\}$ and $s_i=|S_i|$ for $i=0,...l-1$. We know from Lemma \ref{lines} that if $(e_1,e_2)\in S_i$, then  $e_1-e_2$ lies on $p^i$ lines in $\mathcal{L}_0$ and when we fix the line, $l_2-l_1$ can be written $q$ different ways on that line. In other words, for all $s_i$ pairs $(e_1,e_2)\in S_i$, $l_1,l_2$ is chosen $p^iq$ different ways over the lines in $\mathcal{L}_0$.

So altogether we have 
\begin{eqnarray*}
&&|\{(e_1,e_2,l_1,l_2)\in E\times E\times L\times L:\; (\ref{direction})\;\text{holds for some} L\in \mathcal{L}_0\}|\\
&=&|E|q^2+s_0q+s_1pq+s_2p^2q+...+s_{l-1}p^{l-1}q \\
&\leq&|E|^2q+(s_0+s_1p+s_2p^2+...+s_{l-1}p^{l-1})q\\
&=&|E|^2q+(|E|^2+s)q
\end{eqnarray*}
where $s=s_1p+s_2p^2+...+s_{l-1}p^{l-1}$. Note that $s\leq r\leq 2p^{4l-1}\leq2|E|^2$ by Lemma \ref{S} and the assumption on the size of $E$. 

Hence we get,
$$|\{(e_1,e_2,l_1,l_2)\in E\times E\times L\times L:\; (\ref{direction})\;\text{holds for some} L\in \mathcal{L}_0\}|\leq 4|E|^2q$$

\noindent It follows that there exists a $L\in \mathcal{L}_0$ such that  
\begin{eqnarray*}
|\{(e_1,e_2,l_1,l_2)\in E\times E\times L\times L :\; (\ref{direction})\;\text{holds}\}| &\leq& 4|E|^2\frac{p^{l}}{p^l+p^{l-1}}\\
&=&4|E|^2\frac{p}{1+p}.
\end{eqnarray*}

If we let $\nu_{E+L}(n)$ denote the number of representation of $n$ as $e+l$ for some  $e\in E$, $l\in L$, then by Cauchy-Schwarz, for this particular $L$,
\begin{eqnarray*}
|E|^2|L|^2&=&\left(\sum_{n\in E+L}\nu(n)\right)^2\\
&\leq&|E+L|\sum_{n\in E+L}\nu^{2}(n)\\
&=&|E+L||\{(e_1,e_2,l_1,l_2)\in E\times E\times L\times L:\; (\ref{direction})\;\text{holds}\}|.
\end{eqnarray*}

Hence,
\begin{eqnarray*}
|E+L|&\geq& \frac{|E|^2|L|^2}{|\{(e_1,e_2,l_1,l_2)\in E\times E\times L\times L:\; (\ref{direction})\;\text{holds}\}|}\\
&\geq& \frac{|E|^2p^{2l}}{4|E|^2\frac{p}{1+p}}=\frac{q^2}{4}\frac{1+p}{p}.
\end{eqnarray*}

We conclude that there exist points of $E$ in at least $\frac{q}{4}\frac{1+p}{p}$ parallel lines. Here we shall note that there are totally $q$ parallel lines to $L$, including itself, and one of them must contain two points of $E$ with a unit distance in between. For otherwise, on each of these parallel lines there would be at most $p^{l-1}$ points of $E$ yielding $|E|\leq qp^{l-1}=p^{2l-1}$ which is not the case. It follows that points of $E$ determines at least $\frac{q}{4}\frac{1+p}{p}-1$ distinct triangle areas with the same unit base on one of the parallel lines and with the third vertex being on  $\frac{q}{4}\frac{1+p}{p}-1$ different parallel lines.

\end{proof}

\vskip.125in 

\section{Proof of Theorem \ref{dot E} } 
\begin{proof}
Let $$\nu(t)=\{(x,y)\in E\times E:x.y=t\}.$$
Then by the Cauchy-Schwarz inequality,
\begin{equation}\label{cs prod}
|E|^4=\left(\sum_{t\in \mathbb{Z}_q}\nu(t)\right)^2\leq |\prod(E)|\sum_{t\in \mathbb{Z}_q}\nu(t)^2
\end{equation}
We can write
$$\nu(t)=\sum_{x\in E}\sum_{x.y=t}E(y).$$
From the Cauchy-Schwarz inequality, it follows that
 $$\nu^2(t)\leq|E|\sum_{x\in E}\sum_{x.y=x.y'=t}E(y)E(y')$$
 so that 
\begin{eqnarray}\label{l2 ei}
\sum_{t}\nu^2(t)&\leq &|E|\sum_{x.y=x.y'}E(x)E(y)E(y')\nonumber\\
&=&|E|q^{-1}\sum_{s}\sum_{x,y,y'}\chi(s(x.y-x.y'))E(x)E(y)E(y')\nonumber\\
&=&\frac{|E|^4}{q}+\sum_{i=0}^{l-1}e_i,
\end{eqnarray}
where
\begin{eqnarray*}
e_{i}&=&|E|q^{-1}\sum_{\substack{s\\ \upsilon_p(s)=i}}\sum_{x,y,y'}\chi(sx(y-y'))E(x)E(y)E(y')\\
&=&|E|q^{2d-1}\sum_{\substack{s\\ \upsilon_p(s)=i}}\sum_{x}E(x)|\widehat{E}(sx)|^2\\
&=&|E|q^{2d-1}\sum_{s'\in\mathbb{Z}_{p^{l-i}}^*}\sum_{x}E(x)|\widehat{E}(p^{i}s'x)|^2\\
&=&|E|q^{2d-1}\sum_{s'\in\mathbb{Z}_{p^{l-i}}^*}\sum_{x}E(s'x)|\widehat{E}(p^{i}x)|^2.
\end{eqnarray*}
Now denoting $l_x^i=\{s'x:s'\in \mathbb{Z}_{p^{l-i}}^*\}$, we can write
$$e_i=|E|q^{2d-1}\sum_{x}|E\cap l_x^i||\widehat{E}(p^{i}x)|^2.$$

We note here that $|E\cap l_x^i|\leq |A|=p^{\alpha}$. To see this, let $x=(x_1,\ldots, x_d)$ and 
$$\overline{l_x^i}:=\{sx: s\in \mathbb{Z}_{p^{l-i}}\}\supset\{s'x:s'\in \mathbb{Z}_{p^{l-i}}^*\}= l_x^i$$
From the definition, it is clear that $|E\cap l_{x}^i|\leq|E\cap \overline{l_{x}^{i}}|$ and we will show that $|E\cap\overline{l_x^i}|\leq|A|$ where $E=\underbrace{A\times \ldots \times A}_\text{\text{d-fold}}$.

Note that we can assume that at least one of the coordinates of $x=(x_1,\ldots,x_d)$ is unit. Since otherwise if $\upsilon_{p}(x_j)=n>0$ is among the smallest, we can write $x=p^n\tilde{x}$ where $\tilde{x}=(\tilde{x_1},\ldots,\tilde{x_j},\ldots,\tilde{x_d})$ with $\tilde{x_j}$ is unit and $\overline{l_{\tilde{x}}^i}=\overline{l_{x}^{i}}$ gives $|E\cap \overline{l_{\tilde{x}}^i}|=|E\cap\overline{l_{x}^{i}} |$.

So we assume that  $x_j$ is unit. If $\{s_1x,\ldots,s_tx\}=E\cap \overline{l_{x}^{i}}$, then the $j$th coordinates $s_kx_j$ of the vectors $s_kx$, $1\leq k\leq t$, are all different in $A$. Hence $t\leq|A|.$ Thus,
$$|E\cap l_{x}^i|\leq|E\cap \overline{l_{x}^{i}}|=t\leq |A|.$$

Clearly, we also have $|E\cap l_x^i|\leq p^{l-i}$. Therefore $|E\cap l_x^i|\leq min\{p^{l-i},|A|\}$, and 
\begin{eqnarray}\label{mid} 
e_i&\leq&|E|q^{2d-1}\sum_{x}min\{p^{l-i},p^{\alpha}\}|\widehat{E}(p^{i}x)|^2.
\end{eqnarray}
Now we estimate 
\begin{eqnarray*}
\sum_{x}|\widehat{E}(p^{i}x)|^2&=&q^{-2d}\sum_{x}\sum_{u,v}\chi((u-v)p^{i}x)E(u)E(v)\\
&=&q^{-d}\sum_{(u-v)p^{i}=0}E(u)E(v)
\end{eqnarray*}
For $u=(u_1,\ldots,u_d), v=(v_1,\ldots,v_d) \in \underbrace{A\times \ldots \times A}_\text{\text{d-fold}}$, if $$p^{i}(u-v)=p^i(u_1-v_1,\ldots,u_d-v_d)=(0,\ldots,0),$$ then $p^{i}(u_j-v_j)=0$ for all $1\leq j \leq d$. That implies $u_j-v_j\in\{p^{l-i}, 2p^{l-i},\ldots, p^{i}p^{l-i}\}$ and we have $p^i$ choices for $u_j-v_j$. If we first fix $v_j$ in $|A|$ different ways, then $u_j$ is uniquely determined. So we have $|A|p^i$ choices for each $(u_j,v_j)$, giving $|A|^dp^{id}$ choices for $(u,v)$.

It follows that
\begin{eqnarray*}
\sum_{x}|\widehat{E}(p^{i}x)|^2&\leq&q^{-d}p^{id}|A|^d\\
&=&|E|q^{-d}p^{id}
\end{eqnarray*}
plugging this value in (\ref{mid}) and summing over all $e_i's$ for $i=0,...,l-1$, we get
\begin{eqnarray*}
\sum_{i=0}^{l-1}e_i&\leq&|E|^2q^{d-1}\sum_{i=0}^{l-1}min\{p^{l-i},p^{\alpha}\}p^{id}\\
&=&|E|^2q^{d-1}\left(\sum_{i=0}^{l-\alpha}p^{\alpha}p^{id}+\sum_{i=l-\alpha+1}^{l-1}p^{l-i}p^{id}\right)\\
&\lesssim&|E|^2q^{d-1}(p^{\alpha}p^{(l-\alpha)d}+pp^{(l-1)d})\\
&=&|E|^2q^{d-1}p^{ld}(p^{\alpha(1-d)}+p^{1-d})\\
&\lesssim&|E|^2q^{d-1}p^{ld}p^{1-d}\\
&=&|E|^2q^{2d-1}q^{\frac{1-d}{l}}.
\end{eqnarray*}
Plugging this value in (\ref{l2 ei}), the inequality (\ref{cs prod}) yields
$|\prod(E)|\gtrsim\frac{|E|^4}{|E|^2q^{2d-1}q^{\frac{1-d}{l}}}\gtrsim q$, whenever $|E|^2\gtrsim q^{2d+\frac{1-d}{l}}$ i.e. $|E|\gtrsim q^{d(\frac{2l-1}{2l})+\frac{1}{2l}}$.
\end{proof}

\vskip.125in

\end{document}